\documentclass[12pt]{amsart}
\pdfoutput=1
\usepackage{amsmath}
\usepackage{amsfonts}
\usepackage{amsthm}
\usepackage{verbatim}
\usepackage[svgnames]{xcolor}
\usepackage{tikz}
\usepackage[pdftex]{hyperref}

\tikzstyle{connector} = [->,thick]
\usetikzlibrary{calc}
\pgfdeclarelayer{edgelayer}
\pgfdeclarelayer{nodelayer}
\pgfsetlayers{edgelayer,nodelayer,main}

\newtheorem{thm}{Theorem}[section]
\newtheorem{lem}[thm]{Lemma}
\newtheorem*{remark*}{Remark}
\newtheorem{remark}{Remark}

\newtheorem{conj}[thm]{Conjecture}

\theoremstyle{definition}

\begin{document}

\title[Tur\'an numbers of Multiple Paths and Equibipartite Forests]{Tur\'an numbers of Multiple Paths and Equibipartite Forests}

\date{\today}
\author[N.~Bushaw]{Neal Bushaw$^\dag$}
\address{$^\dag$Department of Mathematical Sciences, University of Memphis, Memphis, TN 38152 USA. Ph: {\tt +1~901~678~1319}, Fax: {\tt +1~901~678~4481}
}
\email{nobushaw@memphis.edu}

\author[N.~Kettle]{Nathan Kettle$^\ddag$}
\address{$^\ddag$Department of Pure Mathematics and Mathematical Statistics, University of Cambridge, Cambridge CB3 0WB, UK. Ph: {\tt +44~1223~7~64253}}
\email{n.kettle@dpmms.cam.ac.uk}

\begin{abstract}
The \emph{Tur\'an number} of a graph \(H\), \(\textrm{ex}(n,H)\), is the maximum number of edges in any graph on \(n\) vertices which does not contain \(H\) as a subgraph. Let \(P_l\) denote a path on \(l\) vertices, and \(k\cdot P_l\) denote \(k\) vertex-disjoint copies of \(P_l\). We determine \(\mathrm{ex}(n,k\cdot P_3)\) for \(n\) appropriately large, answering in the positive a conjecture of Gorgol. Further, we determine \( \mathrm{ex}\left(n,k\cdot P_l\right)\) for arbitrary \(l\), and \(n\) appropriately large relative to \(k\) and \(l\). We provide some background on the famous Erd\H{o}s-S\'os conjecture, and conditional on its truth we determine \( \mathrm{ex}(n,H)\) when \(H\) is an equibipartite forest, for appropriately large \(n\).
\end{abstract}

\subjclass[2000]{Primary 05C35, 05C38}

\keywords{Tur\'an Number, Disjoint Paths, Forest, Trees}

\maketitle

\section{Introduction}
Our notation in this paper is standard (see, e.g., \cite{mgt}). Thus \(G\cup H\) denotes the disjoint union of graphs \(G\) and \(H\), and we write \(G+H\) for the join of \(G\) and \(H\), the graph obtained from \(G\cup H\) by adding edges between all vertices of \(G\) and all vertices of \(H\), \(K_t\) for the complete graph on \(t\) vertices, \(E_t\) for the empty graph on \(t\) vertices, and \(M_t\) for a maximal matching on \(t\) vertices; that is, the graph on \(t\) vertices consisting of \(\left\lfloor\frac{t}{2}\right\rfloor\) independent edges. We also take this opportunity to point out that unless explicitly stated, any graph named \(G\) is assumed to be on vertex set \(V=\left[n\right]\) and edge set \(E\); we also make no requirement that the subgraphs we find be induced. 

The Tur\'an number, \( \mathrm{ex}(n,H)\), of a graph \(H\) is the maximum number of edges in a graph on \(n\) vertices which does not contain \(H\) as a subgraph. The problem of determining Tur\'an numbers for assorted graphs traces its history back to 1907, when Mantel (see, e.g., \cite{mgt}) proved that the maximum number of edges in an \(n\)-vertex triangle-free graph is \(\left\lfloor\frac{n^2}{4}\right\rfloor\). In 1940, P\'al Tur\'an  \cite{Turan1,Turan2} proved that the extremal graph avoiding \(K_{r}\) as a subgraph is the complete \((r-1)\)-partite graph on \(n\) vertices which is `as balanced as possible': this is the Tur\'an graph \(T_{r-1}(n)\) . Later, Simonovits \cite{simonovits} showed that for large \(n\), the extremal graph forbidding \(p\cdot K_{r}\) is \(K_{p-1}+T_{r-1}(n-p+1)\). For notation, \(H_{\mathrm{ex}}(n,G)\) will be used to represent a graph on \(n\) vertices with no copy of \(G\) as a subgraph, and exactly \(\mathrm{ex}(n,G)\) edges. We note that in general, the extremal graph(s) may not be unique.

Recently, Gorgol \cite{gorgol} proved upper and lower bounds on the extremal number for forbidding several vertex-disjoint copies of an arbitrary connected graph. We determine this number for paths of length 3 in Section \ref{sec:len3}, longer paths in Section \ref{sec:lenl}, and for forests of equibipartite trees in Section \ref{sec:trees}. We also provide some background on the Erd\H{o}s-S\'os conjecture in Section \ref{sec:erdsos}, as our result for trees is conditional on its validity.

\section{Extremal Numbers for Disjoint Paths}

We start by looking at graphs with no disjoint paths of length three. The extremal case here is slightly different than for longer paths, but the proof introduces the main ideas we shall use in proving the result for all paths, as well as the general tools needed for our results on forests.

\subsection{Paths of length 3}
\label{sec:len3}

Gorgol \cite{gorgol} gave constructions giving the following lower bound regarding paths of length three:
\begin{equation*}
 \mathrm{ex}(n,k\cdot P_3)\geq
\begin{cases} \binom{3k-1}{2}+\left\lfloor\frac{n-3k+1}{2}\right\rfloor\text{, for }3k\leq n<5k-1,\\
\binom{k-1}{2}+(n-k+1)(k-1)+\lfloor\frac{n-k+1}{2}\rfloor\text{, for }n\geq 5k-1.
\end{cases}
\end{equation*}

\begin{remark*} This bound is obtained by noting that for any connected graph \(G\) on \(v\) vertices, and for any positive integers \(n,k\) such that \(n\geq kv\), the graphs \(H_{\mathrm{ex}}(n-kv+1,G)\cup K_{kv-1}\) and \(H_{\mathrm{ex}}(n-k+1,G)+K_{k-1}\) do not contain \(k\) vertex-disjoint copies of \(G\). Applying this to forbidding copies of \(K_3\) and counting the edges in these graphs gives the above bound.\end{remark*}

Gorgol conjectured that this is the correct value of \( \mathrm{ex}(n,k\cdot P_3)\), and proved that this is indeed true for \(k=2,3\). Our first result shows that the second construction is best possible for any \(k\) and large enough \(n\).

\begin{thm}
\label{p3thm}
\( \mathrm{ex}(n,k\cdot P_3)=\binom{k-1}{2}+(n-k+1)(k-1)+\lfloor\frac{n-k+1}{2}\rfloor\), for \(n\geq 7k\).
\end{thm}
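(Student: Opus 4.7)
The plan is to prove the upper bound by induction on $k$, the matching lower bound being the construction exhibited in the remark. Write $f(n,k):=\binom{k-1}{2}+(n-k+1)(k-1)+\lfloor(n-k+1)/2\rfloor$ throughout. The base case $k=1$ is the elementary fact that a $P_3$-free graph is a matching, so $\mathrm{ex}(n,P_3)=\lfloor n/2\rfloor=f(n,1)$. For the inductive step, assume the theorem for $k-1$; I will show that any graph $G$ on $n\geq 7k$ vertices with $e(G)>f(n,k)$ must contain $k\cdot P_3$. The whole argument rests on two routine identities:
$$f(n,k)-f(n-1,k-1)=n-1, \qquad f(n,k)-f(n-3,k-1)=n+2k-4.$$
I split according to $\Delta:=\Delta(G)$.

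In the high-degree case $\Delta\geq 3k-1$, fix a vertex $v$ of maximum degree. If $G-v$ already contains $(k-1)\cdot P_3$, these occupy only $3(k-1)$ vertices of $V\setminus\{v\}$, so at least $\Delta-3(k-1)\geq 2$ neighbors of $v$ lie outside them; picking two such neighbors $x,y$ gives a $P_3$ on $x,v,y$ disjoint from the others, completing $k\cdot P_3$ in $G$. Otherwise $G-v$ has no $(k-1)\cdot P_3$ and the inductive hypothesis (valid since $n-1\geq 7(k-1)$) forces $e(G-v)\leq f(n-1,k-1)$; combined with $\deg(v)\leq n-1$ the first identity gives $e(G)\leq f(n,k)$, contradicting the assumption.

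In the low-degree case $\Delta\leq 3k-2$, since $e(G)>f(n,k)\geq\lfloor n/2\rfloor$ the graph $G$ is not a matching and so contains some $P_3$; fix one, call it $P$. At most $3\Delta-2\leq 9k-8$ edges of $G$ meet $V(P)$, so $G':=G-V(P)$ satisfies $e(G')>f(n,k)-(9k-8)\geq f(n-3,k-1)$ by the second identity together with the hypothesis $n\geq 7k$. The inductive hypothesis (valid since $n-3\geq 7(k-1)$) then produces $(k-1)\cdot P_3$ in $G'$, which with $P$ completes the required $k\cdot P_3$ in $G$.

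The delicate point of this scheme is the choice of threshold $\Delta=3k-1$: above it, a vertex plays the role of an \emph{apex} in the extremal construction $K_{k-1}+M_{n-k+1}$ and can be removed cleanly, while below it the cost $9k-8$ of excising an arbitrary $P_3$ is only just covered by the inductive slack $n+2k-4$. The resulting inequality $9k-8\leq n+2k-4$ is what pins the hypothesis $n\geq 7k$ in the statement (in fact $n\geq 7k-4$ would suffice for the proof). The main conceptual obstacle I would expect is verifying that no intermediate regime requires an auxiliary argument; happily, the clean dichotomy on $\Delta$ covers everything as long as the induction is set up with the $(n-1)$- and $(n-3)$-decrements corresponding to the two excision moves.
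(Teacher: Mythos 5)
Your proof is correct, and it takes a genuinely different route from the paper's. Both arguments are inductions on \(k\) resting on the base case \(\mathrm{ex}(n,P_3)=\lfloor n/2\rfloor\) and on the arithmetic of your \(f(n,k)\) (your identity \(f(n,k)-f(n-3,k-1)=n+2k-4\) is exactly the quantity the paper computes as \(n+2k-3\) once the strict excess \(+1\) is added). But where you split on the maximum degree --- removing an apex vertex of degree at least \(3k-1\) with the \((n-1,k-1)\) decrement, or excising a whole \(P_3\) at cost at most \(3\Delta-2\le 9k-8\) with the \((n-3,k-1)\) decrement --- the paper instead uses its \(n+2k-3\) incidence bound to extract, from \(k-1\) disjoint copies of \(P_3\) found by induction, a set \(U\) of \(k-1\) vertices of degree at least \((n+2k-3)/3\), argues that \(G[V\setminus U]\) must be \(P_3\)-free (otherwise it greedily re-centres \(k-1\) new paths on \(U\), which is the step that forces \(n\ge 7k\) there), and then counts edges against the structure \(K_{k-1}\) plus a matching. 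Your dichotomy is shorter and more elementary; the identities, the two-neighbour count \(\Delta-3(k-1)\ge 2\), the edge-loss bound \(3\Delta-2\), and the ranges \(n-1\ge 7(k-1)\) and \(n-3\ge 7(k-1)\) all check out, and indeed your argument works already for \(n\ge 7k-4\). What the paper's structural route buys in exchange is the description of the extremal graph: its final step exhibits an extremal \(G\) as \(K_{k-1}+M_{n-k+1}\), which justifies the uniqueness claim made right after the theorem and sets up the template (sets of high-degree vertices meeting every copy, followed by a partition-based edge count) reused later for \(k\cdot P_l\) and for equibipartite forests; your high-degree case discards a vertex without recording any structure, so it proves the stated numerical equality but not, as written, the characterisation of the extremal graph.
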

There is a unique graph for which this bound is attained, namely \(K_{k-1}+M_{n-k+1}\), as in Figure \ref{fig:p3}. This graph does not contain \(k\) disjoint copies of \(P_3\), since each \(P_3\) must contain at least one vertex from the \((k-1)\)-clique.

\begin{figure}[ht]
\centering\fbox{
\begin{tikzpicture}[thick,scale=0.75]
   \begin{pgfonlayer}{nodelayer}
        \node (t_1) at (0,2){};
        \node (t_2) at (1,2){};
        \node (t_3) at (2,2){};
        \node (t_4) at (4,2){};
        \node(tellip) at (3,2){$\cdots$};
        \node(tnum) at (2,3){$K_{k-1}$};
        \node (b_1) at (-.5,0){};
        \node (b_2) at (.5,0){};
        \node (b_3) at (1.5,0){};
        \node (b_4) at (2.5,0){};
        \node (b_5) at (4.5,0){};
        \node (b_6) at (5.5,0){};
        \node(bellip) at (3.5,0){$\cdots$};
        \node(bnum) at (2,-0.5){$M_{n-k+1}$};
        \foreach \x in {1,...,4} {\fill (t_\x) circle (0.1);}
        \foreach \x in {1,...,6} {\fill (b_\x) circle (0.1);}
    \end{pgfonlayer}
    \begin{pgfonlayer}{edgelayer}

    \end{pgfonlayer}
        \foreach \x/\y in {1,...,4}{\foreach \y in {1,...,6}{\draw (t_\x) to (b_\y);}}
        \draw (b_1) to (b_2); 
        \draw (b_3) to (b_4);
        \draw (b_5) to (b_6);

        \draw (t_1) to [in=155,out=25] (t_2);
        \draw (t_1) to [in=155,out=25] (t_3);
        \draw (t_1) to [in=155,out=25] (t_4);
        \draw (t_2) to [in=155,out=25] (t_3);
        \draw (t_2) to [in=155,out=25] (t_4);
        \draw (t_3) to [in=155,out=25] (t_4);
\end{tikzpicture}}
\caption{}
\label{fig:p3}
\end{figure}

\begin{proof}
We proceed by induction on \(k\). For \(k=1\), we shall use the following easy lemma.

\begin{lem}
\label{p3lem}
If \(G\) is a graph on \(n\) vertices which contains no \(P_3\), then \(G\) contains at most \(\lfloor\frac{n}{2}\rfloor\) edges; that is, \( \mathrm{ex}(n,P_3)\leq\left\lfloor\frac{n}{2}\right\rfloor\).
\end{lem}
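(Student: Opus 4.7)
The plan is very short: observe that a $P_3$ is simply a vertex together with two distinct neighbors, so a graph contains no $P_3$ as a subgraph if and only if every vertex has degree at most $1$. Thus a $P_3$-free graph on $n$ vertices is a (not necessarily perfect) matching.

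First I would spell out the reduction: suppose for contradiction some vertex $v \in V(G)$ has $\deg(v) \geq 2$, with distinct neighbors $u, w$. Then the edges $uv$ and $vw$ together with the three vertices $u, v, w$ form a copy of $P_3$ in $G$, contradicting our hypothesis. Hence $\Delta(G) \leq 1$.

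From $\Delta(G) \leq 1$ the edge bound is immediate: since each edge uses two vertices of degree $1$ and no vertex appears in two edges, the number of edges is at most $\lfloor n/2 \rfloor$. Equivalently, by the handshake lemma $2|E(G)| = \sum_v \deg(v) \leq n$, so $|E(G)| \leq \lfloor n/2 \rfloor$.

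There is no real obstacle here; the only subtlety is to note the equality case $G = M_n$ (a maximum matching), which will be needed later in the induction step of Theorem \ref{p3thm} to pin down the extremal structure uniquely.
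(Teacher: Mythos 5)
Your proof is correct and follows the same route as the paper: a \(P_3\)-free graph has maximum degree at most \(1\), hence consists of independent edges, giving at most \(\left\lfloor\frac{n}{2}\right\rfloor\) edges. The extra detail you supply (the contradiction argument and the handshake count) just makes explicit what the paper's one-line proof leaves implicit.
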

\begin{proof} If \(G\) contains no \(P_3\), then no vertex can have degree \(\geq 2\), and so \(G\) consists of independent edges; thus the lemma holds.\end{proof}

For the induction step, suppose \(G\) is a graph on \(n\) vertices, with \(m>\binom{k-1}{2}+(n-k+1)(k-1)+\lfloor\frac{n-k+1}{2}\rfloor\) edges, and containing no \(k\cdot P_3\). The number of edges incident to any \(P_3\) in \(G\) must be at least:

\begin{align*}
m-&\mathrm{ex}(n-3,(k-1)\cdot P_3)\\
&\geq\binom{k-1}{2}+(n-k+1)(k-1)+\left\lfloor\frac{n-k+1}{2}\right\rfloor+1\\
&\hspace{0.5cm}-\binom{k-2}{2}-(n-k-1)(k-2)-\left\lfloor\frac{n-k-1}{2}\right\rfloor\\
&= n+2k-3.\end{align*}

Otherwise, the graph induced by the vertices not on this \(P_3\) contains \((k-1)\cdot P_3\) by induction, showing that \(G\) does contain \(k\cdot P_3\).

By the induction hypothesis we can find \(k-1\) vertex-disjoint copies of \(P_3\) in our graph, and each of these must contain a vertex of degree at least \((n+2k-3)/3\). Otherwise, the total number of edges with an endpoint on this \(P_3\) is smaller than \(n+2k-3\). Taking such a high degree vertex from each \(P_3\) gives us a set \(U\) of \(k-1\) vertices each of degree at least \((n+2k-3)/3\).

Assume that \(G[V\setminus U]\) contains \(P_3\). Then, we can still construct another \(k-1\) copies of \(P_3\), each centered on a vertex from \(U\), as long as each vertex in \(U\) has degree large enough to ensure it is connected to at least two vertices not contained on any of the other \(k-1\) copies of \(P_3\); i.e. if \((n+2k-3)/3\geq 3k-1\), and this is the case when \(n\geq7k\). Therefore \(G[V\setminus U]\) consists of independent edges and isolated vertices, and so \(G\) has at most \(\binom{k-1}{2}+(n-k+1)(k-1)+\lfloor\frac{n-k+1}{2}\rfloor\) edges, a contradiction.
\end{proof}

The above proof gives the extremal graph for \(n\geq 7k\). No construction is known giving a better bound for \(n\geq 5k-1\), and we conjecture that the above example is optimal in this range.

\subsection{Longer Paths}
\label{sec:lenl}
We note at this point that in the proof of Theorem \ref{p3thm}, in order to find a \(P_3\) it was enough to find a vertex of degree two; to find subsequent copies of \(P_3\), it sufficed to find vertices of large degree. To adapt this idea to longer paths, we'll look for sets of vertices with large common neighbourhood. This notion will continue to be an integral part of our proofs, and thus we formalize it here.

\begin{lem}
\label{badlemma}
Let \(G\) be a graph on \(n\) vertices with \(m\) edges, \(t\in\mathbb{N}\), and let \(F_1,F_2\) be arbitrary graphs. Then if \(F_1\cup F_2\not\subseteq G\), any \(F_1\) in \(G\) contains \(t\) vertices with shared neighbourhood of size at least \(n'\geq\frac{m^\prime-\left(n-r\right)\left(t-1\right)}{r-t+1}/\binom{r}{t}\), where \(m^\prime=m- \mathrm{ex}(n-r,F_2)-\binom{r}{2}\), and \(r=|V(F_1)|\).
\end{lem}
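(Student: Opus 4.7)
The plan is to fix any copy of $F_1$ in $G$, set $R:=V(F_1)$ with $|R|=r$, and analyze the edges crossing between $R$ and $V\setminus R$. Since $F_1\cup F_2\not\subseteq G$, the induced subgraph $G[V\setminus R]$ must be $F_2$-free and so has at most $\mathrm{ex}(n-r,F_2)$ edges, while $G[R]$ has at most $\binom{r}{2}$ edges. The remaining edges all cross between $R$ and $V\setminus R$, numbering at least $m'=m-\mathrm{ex}(n-r,F_2)-\binom{r}{2}$. Writing $d_v:=|N(v)\cap R|$ for $v\in V\setminus R$, this gives $\sum_{v\in V\setminus R} d_v\geq m'$.

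Next I would isolate the vertices of high $R$-degree. Set $S=\{v\in V\setminus R:d_v\geq t\}$. For $v\notin S$ one has $d_v-(t-1)\leq 0$, so dropping those non-positive contributions only increases the sum of excess degrees:
\[
\sum_{v\in S}(d_v-t+1)\;\geq\;\sum_{v\in V\setminus R}(d_v-t+1)\;\geq\; m'-(n-r)(t-1).
\]
The rest of the argument is a double count. For each $T\in\binom{R}{t}$ let $c(T)=|\{v\in V\setminus R:T\subseteq N(v)\}|$; a $T$ with large $c(T)$ is exactly what the statement asks for. Counting pairs $(v,T)$ with $v\in V\setminus R$ and $T$ a $t$-subset of $N(v)\cap R$ gives $\sum_{v\in S}\binom{d_v}{t}=\sum_T c(T)$, and averaging over the $\binom{r}{t}$ choices of $T$ yields $\max_T c(T)\geq \binom{r}{t}^{-1}\sum_{v\in S}\binom{d_v}{t}$.

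The remaining task, and the one requiring the most care, is passing from the excess-degree bound to a lower bound on $\sum_{v\in S}\binom{d_v}{t}$ that produces exactly the denominator $r-t+1$ demanded by the statement. I would invoke the simple inequality $\binom{d}{t}\geq (d-t+1)/(r-t+1)$ valid for all $t\leq d\leq r$, which holds because in that range the left-hand side is at least $1$ while the right-hand side is at most $1$. Summing term-by-term gives $\sum_{v\in S}\binom{d_v}{t}\geq (m'-(n-r)(t-1))/(r-t+1)$, and dividing by $\binom{r}{t}$ produces the claimed lower bound on $n'$. The main obstacle is spotting that this seemingly loose one-line bound on $\binom{d}{t}$ is exactly calibrated to yield the denominator $(r-t+1)\binom{r}{t}$; sharper convexity-based estimates are available but would deliver a different (and not obviously comparable) form.
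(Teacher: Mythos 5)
Your proposal is correct and follows essentially the same route as the paper: lower-bound the edges from $V(F_1)$ to the rest using the $F_2$-freeness of $G[V\setminus V(F_1)]$, isolate the vertices with at least $t$ neighbours in $V(F_1)$, and pigeonhole over the $\binom{r}{t}$ subsets. The paper simply counts the number $n_0$ of such high-degree vertices directly via $n_0 r+(n-r-n_0)(t-1)\geq m'$ rather than through your weighted $\binom{d_v}{t}$ double count, but the two bookkeeping schemes yield the same bound.
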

\begin{proof}
Assume \(F_1\subseteq G\), say on vertex set \(U\). Since \(G\) contains no \(F_1\cup F_2\), \(G[V\setminus U]\) contains no \(F_2\). Thus \(G[V\setminus U]\) contains at most \( \mathrm{ex}(n-r,F_2)\) edges, and so \(U\) must have at least \(m-\mathrm{ex}(n-r,F_2)-\binom{r}{2}=m^\prime\) edges to \(V\setminus U\). Let \(n_0\) be the number of vertices in \(V\setminus U\) with neighbourhood of size at least \(t\) in \(U\); that is, \(n_0=\left|\left\{v\in V\setminus U:\,\left| N_{U}(v)\right|\geq t\right\}\right|\).

\(U\) has at most \(n_0 r+\left(n-r-n_0\right)\left(t-1\right)\) edges to \(V\setminus U\). Thus \(n_0 r+\left(n-r-n_0\right)\left(t-1\right)\geq m^\prime\), so \(n_0\geq\frac{m^\prime-\left(n-r\right)\left(t-1\right)}{r-t+1}\). Trivially, there are only \(\binom{r}{t}\) subsets of size \(t\) in \(F_1\), and so some subset has shared neighbourhood of size \(n^\prime\geq\frac{m^\prime-\left(n-r\right)\left(t-1\right)}{r-t+1}/\binom{r}{t}\) as claimed.
\end{proof}

The proof of Lemma \ref{p3thm} also required the value of \( \mathrm{ex}(n,P_3)\); for longer paths, we shall use the following result due to Erd\H{o}s and Gallai \cite{EG}.
\begin{thm}
\label{erdosgallai}
For any \(n,l\in\mathbb{N}\), \( \mathrm{ex}(n,P_l)\leq \frac{l-2}{2}n\).
\end{thm}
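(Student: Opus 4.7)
The plan is to prove this by induction on $n$. The base case $n\le l-1$ is immediate from $e(G)\le\binom{n}{2}\le\tfrac{(l-2)n}{2}$, which follows from $n-1\le l-2$.

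For the inductive step $n\ge l$, let $G$ be a $P_l$-free graph on $n$ vertices. If $G$ is disconnected with components $G_1,\dots,G_c$ of orders $n_i<n$, each $G_i$ is also $P_l$-free, so the inductive hypothesis gives $e(G_i)\le\tfrac{(l-2)n_i}{2}$, and summing yields the bound. So I may assume $G$ is connected. I would then aim to locate a vertex $v\in V(G)$ with $d(v)\le\tfrac{l-2}{2}$: given such a $v$, the graph $G-v$ is $P_l$-free on $n-1$ vertices, and $e(G)=d(v)+e(G-v)\le\tfrac{l-2}{2}+\tfrac{(l-2)(n-1)}{2}=\tfrac{(l-2)n}{2}$ by induction, closing the argument.

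To produce such a $v$, I would suppose for contradiction that $\delta(G)\ge\lfloor l/2\rfloor$ (the smallest integer exceeding $\tfrac{l-2}{2}$). Take a longest path $P=v_1v_2\cdots v_k$ in $G$; since $G$ has no $P_l$, $k\le l-1$. All neighbors of $v_1$ and $v_k$ lie on $V(P)$, so the index sets $A=\{i:v_1v_i\in E\}$ and $B=\{i:v_kv_{i-1}\in E\}$ are both subsets of $\{2,\dots,k\}$ with $|A|+|B|\ge 2\lfloor l/2\rfloor\ge l-1\ge k>k-1$. Hence $A\cap B\ne\emptyset$, so there exists $i$ with $v_1v_i,v_kv_{i-1}\in E$, yielding the cycle $C=v_1v_2\cdots v_{i-1}v_kv_{k-1}\cdots v_iv_1$ of length $k$ on $V(P)$. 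Since $n\ge l>k$ and $G$ is connected, some vertex $u\notin V(P)$ is adjacent to a vertex $v_j\in V(C)$; breaking $C$ at an edge of $C$ incident to $v_j$ and appending $u$ produces a path on $k+1$ vertices, contradicting the maximality of $P$. Therefore $\delta(G)\le\tfrac{l-2}{2}$, and the induction closes.

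The main obstacle is the Ore-type pigeonhole step: the crude endpoint-degree bound $d(v_1)\le k-1$ alone gives only $d(v_1)\le l-2$, which is off by a factor of two. The improvement to $\tfrac{l-2}{2}$ comes from pairing the two endpoint degrees — it is the \emph{sum} $|N(v_1)|+|N(v_k)|$, rather than either individually, that is forced to exceed $k-1$ under $\delta(G)\ge\lfloor l/2\rfloor$, producing the required chord configuration and hence the cycle on $V(P)$ which, combined with connectivity and the existence of a vertex off $P$ (guaranteed by $n\ge l>k$), extends $P$ to a longer path.
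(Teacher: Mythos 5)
The paper does not prove this statement at all: it is quoted as a classical theorem of Erd\H{o}s and Gallai \cite{EG}, so there is no internal proof to compare against. Your argument is correct, and it is essentially the standard textbook proof of this bound: induct on \(n\), delete a vertex of degree at most \(\frac{l-2}{2}\) if one exists, and otherwise work in a connected \(P_l\)-free graph with \(\delta(G)\ge\left\lfloor\frac{l}{2}\right\rfloor\), where a longest path \(v_1\cdots v_k\) (so \(k\le l-1\)) has both endpoints with all neighbours on the path, the Ore-type pigeonhole on your index sets \(A\) and \(B\) (valid since \(|A|+|B|=d(v_1)+d(v_k)\ge 2\left\lfloor\frac{l}{2}\right\rfloor\ge l-1\ge k>k-1\)) yields a cycle through all of \(V(P)\), and connectivity plus \(n>k\) then extends to a path on \(k+1\) vertices, a contradiction. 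The only points worth tidying are degenerate ones: for very small \(l\) (or \(k=2\)) the ``cycle'' collapses to a single edge, though the extension step still works because the outside vertex then attaches next to an endpoint of \(P\); and in fact under the minimum-degree hypothesis \(k\ge\left\lfloor\frac{l}{2}\right\rfloor+1\ge 3\) whenever \(l\ge 4\), which covers every case in which the paper invokes the theorem. So your write-up gives a correct, self-contained proof of a result the paper only cites.
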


We note that the bound in Theorem \ref{erdosgallai} is attained by taking disjoint copies of \(K_{l-1}\) as in Figure \ref{fig:erdosgalfig}; this gives a tight result whenever \(n\) is divisible by \(l-1\).
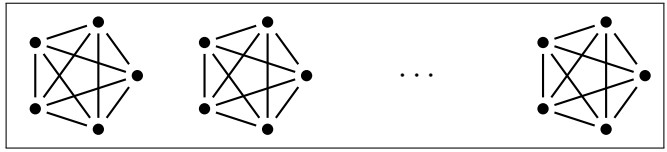
\begin{figure}[ht]
\centering\fbox{
\begin{tikzpicture}[thick,scale=0.75]

    \begin{pgfonlayer}{nodelayer}
        \foreach \x in {1,...,5} {\node (l_\x) at ($(72*\x:1cm)$){}; \node (m_\x) at ($(72*\x:1cm)+(3,0)$){}; \node (r_\x) at ($(72*\x:1cm)+(9,0)$){};}
        \node at (6,0){$\cdots$};
        \foreach \x in {1,...,5} {\fill (l_\x) circle (0.1); \fill (m_\x) circle (0.1); \fill (r_\x) circle (0.1);}
    \end{pgfonlayer}
        \foreach \x/\y in {1/2,1/3,1/4,1/5,2/3,2/4,2/5,3/4,3/5,4/5}{\draw (l_\x) to (l_\y); \draw (m_\x) to (m_\y); \draw (r_\x) to (r_\y);}
\end{tikzpicture}}
\caption{Extremal graph forbidding \(P_6\).}
\label{fig:erdosgalfig}
\end{figure}

We are now ready to prove the main result of this section.
\begin{thm}
\label{longpath}
For \(k\geq 2\), \(l\geq 4\), and \(n\geq 2l+2kl\left(\left\lceil\frac{l}{2}\right\rceil+1\right)\binom{l}{\left\lfloor\frac{l}{2}\right\rfloor}\),
\[ \mathrm{ex}(n,k\cdot P_l)=\binom{k\left\lfloor\frac{l}{2}\right\rfloor-1}{2}+\left(k\left\lfloor\frac{l}{2}\right\rfloor
-1\right)\left(n-k\left\lfloor\frac{l}{2}\right\rfloor+1\right)~+~c_l,\] where \(c_l=1\) if \(l\) is odd, and \(c_l=0\) if \(l\) is even.
\end{thm}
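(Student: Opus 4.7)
My strategy is to induct on $k$, following the template of the $P_3$ proof but using Lemma \ref{badlemma} to find \emph{sets} of hub vertices with large common neighborhoods (in place of single high-degree vertices). The base $k = 2$ relies on Erd\H{o}s--Gallai (Theorem \ref{erdosgallai}) to bound $\mathrm{ex}(n - l, P_l)$, and the inductive step for $k \geq 3$ uses the corresponding bound for $(k-1)\cdot P_l$ from the induction hypothesis.

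For the lower bound, I would verify that the graph $G^\ast = K_{k\lfloor l/2\rfloor - 1} + H$, where $H$ is an empty graph on $n - k\lfloor l/2\rfloor + 1$ vertices when $l$ is even and the same graph plus one extra edge when $l$ is odd, has the stated edge count and contains no $k \cdot P_l$. The key point is that every $P_l$ in $G^\ast$ must use at least $\lfloor l/2\rfloor$ clique vertices, so $k$ disjoint copies would require $k\lfloor l/2\rfloor$ clique vertices---one more than available. A direct check shows the single outside edge does not reduce this per-path requirement when $l$ is odd, whereas a single such edge would reduce it by one (via a path of the form $u - v - h_1 - x_1 - \cdots$) when $l$ is even, explaining why $c_l = 0$ in the even case.

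For the upper bound, suppose for contradiction $G$ has $m > f(n,k)$ edges (writing $f(n,k)$ for the claimed value) and contains no $k \cdot P_l$. I would apply Lemma \ref{badlemma} iteratively with $F_1 = P_l$ and $F_2 = (k-i)\cdot P_l$, working at step $i$ in $G_i := G - \bigcup_{j < i} V(P^{(j)})$; the hypothesis $F_1 \cup F_2 \not\subseteq G_i$ is preserved automatically since no subgraph of $G$ contains $k \cdot P_l$. This produces $k - 1$ disjoint copies $P^{(1)}, \ldots, P^{(k-1)}$ of $P_l$, each with a designated $\lfloor l/2\rfloor$-subset $W^{(i)} \subseteq V(P^{(i)})$ of hub vertices whose common neighborhood in the unused part of the graph has size much greater than $kl$. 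A final application of the lemma (or a direct degree argument in the graph with $\bigcup W^{(i)}$ removed) provides the additional $\lfloor l/2\rfloor - 1$ hubs needed to reach a total hub set $U$ of size exactly $k\lfloor l/2\rfloor - 1$. The hypothesis $n \geq 2l + 2kl(\lceil l/2\rceil + 1)\binom{l}{\lfloor l/2\rfloor}$ is calibrated so that each application of Lemma \ref{badlemma}---whose output bound carries the factor $\binom{r}{t} = \binom{l}{\lfloor l/2\rfloor}$---yields a common neighborhood large enough for the subsequent reconstruction step.

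If $G[V \setminus U]$ contains more than $c_l$ edges, I would construct $k \cdot P_l$ and derive a contradiction: for $l$ even a single outside edge $uv$ seeds a path of the form $u - v - h_1 - x_1 - h_2 - x_2 - \cdots$ using only $\lfloor l/2\rfloor - 1$ hub vertices, while for $l$ odd two outside edges (sharing a vertex, or disjoint) give the same saving. The remaining $k - 1$ paths are built by alternating the hub subsets $W^{(i)}$ with fresh vertices drawn from their common neighborhoods, and the accounting uses precisely $(k-1)\lfloor l/2\rfloor + (\lfloor l/2\rfloor - 1) = |U|$ hubs; the large-$n$ hypothesis guarantees enough unused common neighbors to prevent collisions between paths. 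The resulting $k \cdot P_l$ contradicts our assumption, so $G[V \setminus U]$ has at most $c_l$ edges and
\[ m \leq \binom{|U|}{2} + |U|(n - |U|) + c_l = f(n,k), \]
the final contradiction. The hardest parts will be extracting the exact hub count $k\lfloor l/2\rfloor - 1$ rather than only $(k-1)\lfloor l/2\rfloor$, and executing the simultaneous path-reconstruction so that distinct hub subsets never compete for the same fresh neighbors---which is precisely why the large-$n$ hypothesis takes the specific form given.
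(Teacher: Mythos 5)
Your lower-bound discussion is fine, but the upper bound has a genuine gap at its core: the step that produces the extra $\lfloor l/2\rfloor-1$ hubs, and the claim that once a set $U$ with $|U|=k\lfloor l/2\rfloor-1$ is fixed, more than $c_l$ edges in $G[V\setminus U]$ would let you build $k\cdot P_l$. Lemma \ref{badlemma} only gives you, for each \emph{chosen} copy of $P_l$, some $\lfloor l/2\rfloor$-set with a large common neighbourhood; it gives no control over the other copies of $P_l$ in $G$ and no reason why all such hub sets should be confined to $k\lfloor l/2\rfloor-1$ vertices, so ``a final application of the lemma'' does not supply the missing hubs with the covering property you need. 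More seriously, your contradiction construction requires an endpoint of the outside edge $uv$ to be adjacent to a hub: an edge both of whose endpoints have no neighbours in $U$ cannot be seeded into a path $u-v-h_1-x_1-\cdots$, and such edges cannot be excluded by exhibiting a forbidden configuration at all --- a $k\cdot P_l$-free graph may contain whole $P_l$-free clusters (disjoint copies of $K_{l-1}$, say) untouched by any sensible hub set; they can only be ruled out by an edge count. This is exactly where the paper does its real work: in the base case $k=2$ it forms an $\lfloor l/2\rfloor$-uniform hypergraph from the hub sets of \emph{all} copies of $P_l$, shows via Lemma \ref{smallhgraph} and a case analysis that this hypergraph is $2$-intersecting and hence lives on a set $A$ of at most $2\lfloor l/2\rfloor-1$ vertices, and then partitions the remaining vertices into classes $B,C,D$ by their degree into $A$; edges inside $D$ (no neighbours in $A$) are bounded only by Erd\H{o}s--Gallai, and the theorem follows from a final count in which $|C|$ and $|D|$ carry negative coefficients --- not from a dichotomy ``at most $c_l$ outside edges or else $k\cdot P_l$.'' Your proposal has no substitute for the intersecting-hypergraph step or for this counting, so the key claim that $G[V\setminus U]$ has at most $c_l$ edges is unsupported.

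A secondary, fixable issue: your iterative scheme needs $P_l\subseteq G_i$ at every step, but after deleting up to $(k-2)l$ vertices the surviving edge count can fall below $\mathrm{ex}(n_i,P_l)$, so Erd\H{o}s--Gallai no longer guarantees the next path. The paper sidesteps both problems by concentrating the difficulty in $k=2$ and then, for the inductive step, removing a single $\lfloor l/2\rfloor$-set $U$ from one path and applying the induction hypothesis (including uniqueness of the extremal graph) to $G[V\setminus U]$, which still has at least $\mathrm{ex}(n-\lfloor l/2\rfloor,(k-1)\cdot P_l)$ edges; that route avoids ever having to consolidate hubs from many paths simultaneously.
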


Note that the result above for \(k\cdot P_l\) for \(l\geq4\) does not match the earlier result for \(k\cdot P_3\) in Theorem \ref{p3thm}. 

The extremal graph here is \(G(n,k,l):=K_{t}+E_{n-t}\), with a single edge added to the empty class when \(l\) is odd, and \(t=k\left\lfloor\frac{l}{2}\right\rfloor-1\), as seen in Figures \ref{fig:plodd}, \ref{fig:pleven} respectively.

\begin{figure}[ht]
\centering
\begin{minipage}{0.45\linewidth}
\centering\fbox{
\begin{tikzpicture}[thick,scale=0.75]
    \begin{pgfonlayer}{nodelayer}
        \node (t_1) at (0,2){};
        \node (t_2) at (1,2){};
        \node (t_3) at (2,2){};
        \node (t_4) at (4,2){};
        \node(tellip) at (3,2){$\cdots$};
        \node(tnum) at (2,3){$K_{k\left\lfloor\frac{l}{2}\right\rfloor-1}$};
        \node (b_1) at (-.5,0){};
        \node (b_2) at (.5,0){};
        \node (b_3) at (1.5,0){};
        \node (b_4) at (2.5,0){};
        \node (b_5) at (4.5,0){};
        \node (b_6) at (5.5,0){};
        \node(bellip) at (3.5,0){$\cdots$};
        \node(bnum) at (1.5,-1){$P_2+E_{n-k\left\lfloor\frac{l}{2}\right\rfloor-1}$};
        \foreach \x in {1,...,4} {\fill (t_\x) circle (0.1);}
        \foreach \x in {1,...,6} {\fill (b_\x) circle (0.1);}
    \end{pgfonlayer}
    \begin{pgfonlayer}{edgelayer}

    \end{pgfonlayer}
        \foreach \x/\y in {1,...,4}{\foreach \y in {1,...,6}{\draw (t_\x) to (b_\y);}}
        \draw (b_1) to (b_2); 
        \draw (t_1) to [in=155,out=25] (t_2);
        \draw (t_1) to [in=155,out=25] (t_3);
        \draw (t_1) to [in=155,out=25] (t_4);
        \draw (t_2) to [in=155,out=25] (t_3);
        \draw (t_2) to [in=155,out=25] (t_4);
        \draw (t_3) to [in=155,out=25] (t_4);
\end{tikzpicture}}
\caption{}
\label{fig:plodd}
\end{minipage}
\hspace{0.5cm}
\begin{minipage}{0.45\linewidth}
\centering\fbox{
\begin{tikzpicture}[scale=0.75,thick]
    \begin{pgfonlayer}{nodelayer}
        \node (t_1) at (0,2){};
        \node (t_2) at (1,2){};
    \node (t_3) at (2,2){};
    \node (t_4) at (4,2){};
    \node(tellip) at (3,2){$\cdots$};
    \node(tnum) at (2,3){$K_{k\left\lfloor\frac{l}{2}\right\rfloor-1}$};
    \node (b_1) at (-.5,0){};
    \node (b_2) at (.5,0){};
    \node (b_3) at (1.5,0){};
    \node (b_4) at (2.5,0){};
    \node (b_5) at (4.5,0){};
    \node (b_6) at (5.5,0){};
    \node(bellip) at (3.5,0){$\cdots$};
    \node(bnum) at (2,-1){$E_{n-k\left\lfloor\frac{l}{2}\right\rfloor+1}$};
    \foreach \x in {1,...,4} {\fill (t_\x) circle (0.1);}
    \foreach \x in {1,...,6} {\fill (b_\x) circle (0.1);}
  \end{pgfonlayer}
  \begin{pgfonlayer}{edgelayer}
  \end{pgfonlayer}
    \foreach \x/\y in {1,...,4}{\foreach \y in {1,...,6}{\draw (t_\x) to (b_\y);}}
    \draw (t_1) to [in=155,out=25] (t_2);
    \draw (t_1) to [in=155,out=25] (t_3);
    \draw (t_1) to [in=155,out=25] (t_4);
    \draw (t_2) to [in=155,out=25] (t_3);
    \draw (t_2) to [in=155,out=25] (t_4);
    \draw (t_3) to [in=155,out=25] (t_4);
\end{tikzpicture}}
\caption{}
\label{fig:pleven}
\end{minipage}
\end{figure}

\begin{remark}
We note that for paths of even lengths, the above bound can be proven, and the extremal structure determined, via a paper of Balister, Gy\H{o}ri, Lehel, and Schelp \cite{BGLS} as a consequence of a theorem regarding the maximal number of edges in a connected graph containing no path of some fixed length. One can divide a long path into many short even paths, and this allows one to deduce our Theorem \ref{longpath} from their Theorem 1.3; for odd length paths this result gives a nonoptimal number of edges due to parity issues. This extremal number within connected graphs was also determined earlier by Kopylov in 1977 \cite{Kopylov}, but the approach in the proof given there did not give the extremal structure.
\end{remark}

\begin{proof}
We proceed by induction on k, starting with the base case, \(k=2\).

Let \(G\) be a graph with \(|V|=n\geq 2l+4l\left(\left\lceil\frac{l}{2}\right\rceil+1\right)\binom{l}{\left\lfloor\frac{l}{2}\right\rfloor}\), \(|E(G)|\geq\binom{2\left\lfloor\frac{l}{2}\right\rfloor-1}{2}+\left(2\left\lfloor\frac{l}{2}\right\rfloor
-1\right)\left(n-2\left\lfloor\frac{l}{2}\right\rfloor+1\right)~+~c_l\), and which contains no \(2\cdot P_l\). As \(n\geq l^2\), we have that \(|E(G)|> \mathrm{ex}(n,P_l)\), and so \(G\) contains a \(P_l\) on vertex set \(U\). Using Lemma \ref{badlemma} with \(F_1=P_l\), \(F_2=P_l\), and \(m=\binom{2\left\lfloor\frac{l}{2}\right\rfloor-1}{2}+\left(2\left\lfloor\frac{l}{2}\right\rfloor
-1\right)\left(n-2\left\lfloor\frac{l}{2}\right\rfloor+1\right)~+~c_l\), some elementary simplification shows that any \(P_l\) contained in \(G\) must have at least \(\left\lfloor\frac{l}{2}\right\rfloor\) vertices sharing a neighbourhood of size at least 
\begin{align*}
n^\prime=&\frac{\binom{2\left\lfloor\frac{l}{2}\right\rfloor-1}{2}+\left(2\left\lfloor\frac{l}{2}\right\rfloor-1\right)\left(n-2\left\lfloor\frac{l}{2}\right\rfloor+1\right)}{\left(\left\lceil\frac{l}{2}\right\rceil+1\right)\binom{l}{\left\lfloor\frac{l}{2}\right\rfloor}}\\
&+\frac{c_l-\mathrm{ex}(n-l,P_l)-\binom{l}{2}-(n-l)\left(\left\lfloor\frac{l}{2}\right\rfloor-1\right)}{\left(\left\lceil\frac{l}{2}\right\rceil+1\right)\binom{l}{\left\lfloor\frac{l}{2}\right\rfloor}}\\
\geq&\frac{\binom{2\left\lfloor\frac{l}{2}\right\rfloor-1}{2}+\left(2\left\lfloor\frac{l}{2}\right\rfloor-1\right)\left(n-2\left\lfloor\frac{l}{2}\right\rfloor+1\right)}{\left(\left\lceil\frac{l}{2}\right\rceil+1\right)\binom{l}{\left\lfloor\frac{l}{2}\right\rfloor}}\\
&+\frac{c_l-\left(n-l\right)\left(\frac{l}{2}-1\right)-\binom{l}{2}-\left(n-l\right)\left(\left\lfloor\frac{l}{2}\right\rfloor-1\right)}{\left(\left\lceil\frac{l}{2}\right\rceil+1\right)\binom{l}{\left\lfloor\frac{l}{2}\right\rfloor}}\\
\geq&\frac{\left(1-\frac{c_l}{2}\right)n-l}{\left(\left\lceil\frac{l}{2}\right\rceil+1\right)\binom{l}{\left\lfloor\frac{l}{2}\right\rfloor}}\\
\end{align*}

We now create an \(\left\lfloor\frac{l}{2}\right\rfloor\)-uniform hypergraph \(\mathcal{H}\) with \(V(\mathcal{H})=V(G)\) as follows: for any \(P_l\subseteq G\), we find a subset \(U'\) of \(\left\lfloor\frac{l}{2}\right\rfloor\) vertices with a large common neighbourhood, as above, and add \(U'\) as an edge in \(\mathcal{H}\).

We now flatten this hypergraph to form a simple graph \(G'\) on the same vertex set, with \(uv\in E(G')\) whenever \(u\) and \(v\) are contained in the same hyperedge.

\begin{figure}[ht]
\centering\fbox{
\begin{tikzpicture}[scale=0.75, thick]
  \begin{pgfonlayer}{nodelayer}
    \foreach \x in {1,...,5}{\node (t_\x) at (\x*72:1cm){}; \node (r_\x) at ($(t_\x)+(4.5,0)$){};};
    \node(arr_1) at (0:2cm){};
    \node(arr_2) at (0:3cm){};
\end{pgfonlayer}

 \draw [connector] (arr_1) to (arr_2);
 \draw (r_1) to (r_2);
 \draw (r_2) to (r_3);
 \draw (r_3) to (r_1);
 \draw (r_5) to (r_1);
 \draw (r_5) to (r_2);

\begin{scope}[fill opacity=0.7, draw opacity = 1.0]
 \filldraw[fill=lightgray!80] ($(t_1)+(0.3,0.3)$)
  to[out=90,in=90] ($(t_2) + (-.5,0)$)
  to[out=270,in=90] ($(t_3) + (-0.3,-0.3)$)
  to[out=270,in=0] ($(t_1) + (0,0.5)$);
 \filldraw[fill=lightgray!40] ($(t_5) + (0.3,0.3)$)
  to[out=90,in=0] ($(t_1) + (0,0.5)$)
  to[out=180,in=90] ($(t_2) + (-.3,-.3)$)
  to[out=270,in=270] ($(t_5) + (0.3,0.3)$);
\end{scope}
\foreach \x in {1,...,5} {\fill (t_\x) circle (0.1); \fill (r_\x) circle (0.1);}
\end{tikzpicture}}
\caption{Flattening a Hypergraph.}
\label{fig:flatten}
\end{figure}
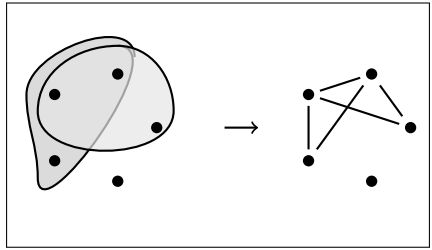

Since vertices adjacent in \(G'\) have large common neighbourhood, a path of length \(\left\lfloor\frac{l}{2}\right\rfloor\) in \(G'\) lets us find a path of length \(l\) in \(G\). More formally, as \(n^\prime\geq 2l\), if \(G^\prime\) contains \(2\cdot P_{\left\lfloor\frac{l}{2}\right\rfloor}\), we can choose distinct common neighbours for each pair of consecutive vertices in these paths, and distinct neighbours for the end vertices, giving us \(2\cdot P_l\) in \(G\). Thus \(G^\prime\) cannot contain \(2\cdot P_{\left\lfloor\frac{l}{2}\right\rfloor}\).

We further note that certainly two disjoint hyperedges in \(\mathcal{H}\) give rise to two such disjoint paths. Thus every pair of edges in \(\mathcal{H}\) intersect; such a hypergraph is called \emph{intersecting}. We will further call a hypergraph \(k\)-\emph{intersecting} if every pair of edges intersect in at least \(k\) vertices.

\begin{lem}\label{smallhgraph}
If there exists \(X\subseteq V(\mathcal{H})\), with \(|X|=t<\left\lfloor\frac{l}{2}\right\rfloor\), and such that \(X\) contains some vertex from each edge in \(\mathcal{H}\), then \(|E(G)|<|E(G(n,2,l))|\).\end{lem}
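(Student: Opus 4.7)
The plan is to exploit the fact that a transversal $X$ of $\mathcal{H}$ of size $t<\lfloor l/2\rfloor$ forces every copy of $P_l$ in $G$ to meet $X$, which pins $G$ down to a very structured form whose edge count can be bounded directly via Erd\H{o}s--Gallai.

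First I would record the key observation: by construction of $\mathcal{H}$, for any $P_l\subseteq G$ there is a hyperedge $U'\in E(\mathcal{H})$ with $U'\subseteq V(P_l)$. Since $X$ meets every hyperedge of $\mathcal{H}$, it meets $U'$, and hence $V(P_l)$. Consequently $G[V\setminus X]$ contains no $P_l$.

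Next I would split $E(G)$ into three parts according to how edges interact with $X$: the edges inside $X$ contribute at most $\binom{t}{2}$, the edges between $X$ and $V\setminus X$ at most $t(n-t)$, and the edges inside $V\setminus X$ at most $\frac{l-2}{2}(n-t)$ by Theorem \ref{erdosgallai} applied to the $P_l$-free graph $G[V\setminus X]$. This gives
\[
|E(G)| \;\leq\; \binom{t}{2} + t(n-t) + \frac{l-2}{2}(n-t).
\]
For $n$ large the right-hand side is strictly increasing in $t$ on the range $0\leq t \leq \lfloor l/2\rfloor - 1$, so the worst case is $t=\lfloor l/2\rfloor-1$, which I would plug in.

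Finally I would compare this bound against
\[
|E(G(n,2,l))| = \binom{2\lfloor l/2\rfloor-1}{2} + \left(2\lfloor l/2\rfloor-1\right)\left(n-2\lfloor l/2\rfloor+1\right) + c_l.
\]
The coefficient of $n$ on the extremal side is $2\lfloor l/2\rfloor-1$, whereas on our upper bound it is at most $(\lfloor l/2\rfloor-1) + (l-2)/2$, which is smaller by $1$ when $l$ is even and by $1/2$ when $l$ is odd. The resulting gap grows linearly in $n$ and easily dominates the bounded constant discrepancies, so the hypothesis on $n$ from Theorem \ref{longpath} makes the inequality strict. The only step requiring any care is this final arithmetic comparison, and even there the even and odd cases behave identically in spirit; I do not expect any serious obstacle.
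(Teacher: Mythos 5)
Your proposal is correct and follows essentially the same route as the paper: observe that $X$ being a transversal of $\mathcal{H}$ forces $G[V\setminus X]$ to be $P_l$-free, decompose $|E(G)|$ as $\binom{t}{2}+t(n-t)+\frac{l-2}{2}(n-t)$ via Erd\H{o}s--Gallai, and win because the coefficient of $n$ falls strictly short of $2\left\lfloor\frac{l}{2}\right\rfloor-1$, so the linear gap swamps the constants for the assumed range of $n$. The paper condenses your final parity comparison into the single uniform bound $\left(2\left\lfloor\frac{l}{2}\right\rfloor-\frac{3}{2}\right)n$ versus $\left(2\left\lfloor\frac{l}{2}\right\rfloor-1\right)n-l^2$ with $n>2l^2$, but the argument is the same.
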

\begin{proof}
Assume \(X\) is such a set. By the construction of \(\mathcal{H}\), since \(\mathcal{H}[V(\mathcal{H})\setminus X]\) contains no hyperedges, \(G[V(G)\setminus X]\) contains no \(P_l\), and so Theorem \ref{erdosgallai} tells us that \[|E(G)|\leq\binom{t}{2}+t(n-t)+\frac{l-2}{2}(n-t)\leq\left(2\left\lfloor\frac{l}{2}\right\rfloor-\frac{3}{2}\right)n.\]
Recall that 
\begin{align*}
|E\left(G\left(n,2,l\right)\right)|&=\binom{2\left\lfloor\frac{l}{2}\right\rfloor-1}{2}+\left(2\left\lfloor\frac{l}{2}\right\rfloor-1\right)\left(n-2\left\lfloor\frac{l}{2}\right\rfloor+1\right)+c_l\\
&\geq\left(2\left\lfloor\frac{l}{2}\right\rfloor-1\right)n-l^2,
\end{align*}
and so as \(n>2l^2\), \(|E(G)|<\left|E\left(G\left(n,2,l\right)\right)\right|\).\end{proof}

Now, assume we have at least \(2\left\lfloor\frac{l}{2}\right\rfloor\) vertices contained in edges of \(\mathcal{H}\), but without \(2\cdot P_{\left\lfloor\frac{l}{2}\right\rfloor}\) in \(G^\prime\). We claim that no two hyperedges can intersect in only a single vertex.

If \(E_1,E_2\in E(\mathcal{H})\) with \(E_1\cap E_2=\{x\}\), then \(|E_1\cup E_2|=2\left\lfloor\frac{l}{2}\right\rfloor-1\) vertices, and so \(\mathcal{H}\) contains an edge \(E_3\) not contained in their union. We may assume that this edge intersects \(E_1\cup E_2\) outside \(\{x\}\), as if no such edge exists, we are done by Lemma \ref{smallhgraph} applied to the set \(\{x\}\). Without loss of generality, \(E_3\cap E_1\not\subseteq E_1\cap E_2\cup\{x\}\).

Let us consider two cases.

\begin{figure}[ht]
\centering
\begin{minipage}{0.45\linewidth}
\centering\fbox{
\begin{tikzpicture}[scale=0.74,fill opacity=1.0,thick]

  \begin{pgfonlayer}{nodelayer}
    \node(x_1) at (2,1){};
    \node(x_2) at (4,2){};
    \node(x_3) at (4,0){};
    \node(x_4) at (0,0){};
    \node(x_5) at (0,2){};
    \node(x_6) at (2,3){};
    \node(x_7) at (-1.5,3){};
    \node(e_1) at (0.5,1){};
    \node(e_2) at (3.5,1){};
    \node(e_3) at (2,2.25){};
\end{pgfonlayer}
\begin{scope}[fill opacity=0.8]
 \filldraw[fill=lightgray!0] ($(x_4)-(0.3,0.3)$)
  to[out=0,in=270] ($(x_1) + (0.5,0)$)
  to[out=90,in=0] ($(x_5) + (-0.3,0.3)$)
  to[out=180,in=180] ($(x_4) - (0.3,0.3)$);
 \filldraw[fill=lightgray!40] ($(x_3)+(0.3,-0.3)$)
  to[out=180,in=270] ($(x_1) + (-0.5,0)$)
  to[out=90,in=180] ($(x_2) + (0.3,0.3)$)
  to[out=0,in=0] ($(x_3) + (0.3,-0.3)$);
 \filldraw[fill=lightgray!80] ($(x_2)+(0.3,-0.3)$)
  to[out=180,in=0] ($(x_5)+(-0.3,-0.3)$)
  to[out=180,in=180] ($(x_6)+(0,0.5)$)
  to[out=0,in=0] ($(x_2)+(0.3,-0.3)$);
\end{scope}
\foreach \x in {1,...,5,7} {\fill[fill opacity=1.0] (x_\x) circle (0.1);}
\draw (x_7) to (x_5) to (x_4);
\draw (x_1) to (x_3) to (x_2);
\fill (x_1) circle (0.1) node [below] {$x$};
\fill (e_1) node {$e_1$};
\fill (e_2) node {$e_2$};
\fill (e_3) node {$e_3$};

\end{tikzpicture}}
\caption{Case 1.}
\end{minipage}
\hspace{0.5cm}
\begin{minipage}{0.45\linewidth}
\centering\fbox{
\begin{tikzpicture}[scale=0.75,fill opacity=1.0,thick]
  \begin{pgfonlayer}{nodelayer}
    \node(x_1) at (2,1){};
    \node(e_1) at (0.5,1){\(e_1\)};
    \node(e_2) at (3.5,1){\(e_2\)};
    \node(e_3) at (1.5,2.5){\(e_3\)};
    \node(x_2) at (4,2){};
    \node(x_3) at (4,0){};
    \node(x_4) at (0,0){};
    \node(x_5) at (0,2){};
    \node(x_6) at (2,3){};
\end{pgfonlayer}
\begin{scope}[fill opacity =0.5]
 \filldraw[fill=lightgray!0] ($(x_4)-(0.3,0.3)$)
  to[out=0,in=270] ($(x_1) + (0.5,0)$)
  to[out=90,in=0] ($(x_5) + (-0.3,0.3)$)
  to[out=180,in=180] ($(x_4) - (0.3,0.3)$);
 \filldraw[fill=lightgray!40] ($(x_3)+(0.3,-0.3)$)
  to[out=180,in=270] ($(x_1) + (-0.5,0)$)
  to[out=90,in=180] ($(x_2) + (0.3,0.3)$)
  to[out=0,in=0] ($(x_3) + (0.3,-0.3)$);
 \filldraw[fill=lightgray!80] ($(x_1)+(0.3,-0.3)$)
  to[out=45,in=0] ($(x_6)+(0,0.5)$)
  to[out=180,in=90] ($(x_5)+(-0.3,-0.3)$)
  to[out=270,in=225] ($(x_1)+(0.3,-0.3)$);
\end{scope}
\foreach \x in {1,...,6} {\fill (x_\x) circle (0.1);}
\draw (x_6) to (x_5) to (x_4);
\draw (x_1) to (x_3) to (x_2);
\fill (x_1) circle (0.1) node [below] {$x$};
\fill (e_1) node {$e_1$};
\fill (e_2) node {$e_2$};
\fill (e_3) node {$e_3$};
\end{tikzpicture}}
\caption{Case 2.}
\end{minipage}
\end{figure}

\underline{Case 1:} \(E_3\cap \left(E_2\setminus E_1\right)\neq\emptyset\). Then we can find a cycle in \(G^\prime\) through all the vertices in \(E_1\cup E_2\). Since we have at least \(2\left\lfloor\frac{l}{2}\right\rfloor\) vertices in edges of \(G^\prime\), there is another vertex adjacent to this cycle. This gives us a path of length \(2\left\lfloor\frac{l}{2}\right\rfloor\), and so \(G^\prime\) contains \(2\cdot P_{\left\lfloor\frac{l}{2}\right\rfloor}\).

\underline{Case 2:} \(E_3\cap \left(E_2\setminus E_1\right)=\emptyset\). Then there is some \(y\in E_3\setminus\left(E_1\cup E_2\right)\), and so we can form one \(P_{\left\lfloor\frac{l}{2}\right\rfloor}\) in \(\left(E_1\setminus\{x\}\right)\cup\{y\}\) and a disjoint \(P_{\left\lfloor\frac{l}{2}\right\rfloor}\) entirely inside \(E_2\).

\(\mathcal{H}\) is an intersecting hypergraph, with at least \(2\left\lfloor\frac{l}{2}\right\rfloor\) vertices contained in its edges, and no two edges can intersect in a single vertex, and so \(\mathcal{H}\) is 2-intersecting. \(\mathcal{H}\) has nonempty edge set, so pick an edge \(E\), and any vertex in \(x\in E\). Each edge in \(\mathcal{H}\) intersects \(E\) in at least two vertices, so any edge in \(\mathcal{H}\) intersects \(E\setminus\{x\}\), a set of size \(\left\lfloor\frac{l}{2}\right\rfloor-1\). We have already ruled out such a set of vertices in Lemma \ref{smallhgraph}.

We now know \(G\) contains a set \(A\) of vertices, \(\left|A\right|\leq2\left\lfloor\frac{l}{2}\right\rfloor-1\), with the property that any \(P_l\) in \(G\) contains at least \(\left\lfloor\frac{l}{2}\right\rfloor\) vertices from \(A\). We define three more sets of vertices as follows:
\begin{align*}
B&=\left\{x\in G\setminus A\,|\,d_{A}(x)\geq \left\lfloor\frac{l}{2}\right\rfloor\right\},\\
C&=\left\{x\in G\setminus A\,|\,\left\lfloor\frac{l}{2}\right\rfloor>d_{A}(x)>0\right\},\\
D&=\left\{x\in G\setminus A\,|\,d_{A}(x)=0\right\}.
\end{align*}
Certainly \(D\) can contain no \(P_l\), since every \(P_l\) meets \(A\). Thus the number of edges entirely within \(D\) is at most \(\frac{l-2}{2}|D|\) by Theorem \ref{erdosgallai}.

We now claim that every vertex \(x\in B\cup C\) is the end vertex of a \(P_l\) in \(G\), with alternate vertices in \(A\), which also misses any given \(y_1,y_2\in B\cup C\). Since \(x\) is adjacent to some \(y\in A\), and \(y\) is contained in some hyperedge \(E\), as long as \(n^\prime>\left|A\right|+\left\lfloor\frac{l}{2}\right\rfloor+2\), we can find \(\left\lfloor\frac{l}{2}\right\rfloor\) vertices in \(\left(B\cup C\right)\setminus\{x\}\) adjacent to all vertices in \(E\), allowing us to find such a \(P_l\).

\begin{figure}[ht]
\centering
\begin{minipage}{0.45\linewidth}
\centering\fbox{
\begin{tikzpicture}[scale=0.65,thick]
  \begin{pgfonlayer}{nodelayer}
   \node (a) at (4,3){\(A\)};
   \node (b) at (1,0){\(B\)};
   \node (c) at (4,0){\(C\)};
   \node (d) at (7,0){\(D\)};
   \node (u) at (7,0.5){};
   \node (v) at (4.5,0.5){};
   \node (w) at (3.5,0.5){};
   \foreach \x in {0,...,2}{\node (a_\x) at ($(3.5,3)-(0.5*\x,0)$){}; \node (b_\x) at ($(1.5,0.5)-(0.5*\x,0)$){};}
   \begin{scope}[fill opacity = 0.5]
   \filldraw[fill=lightgray] (a) ellipse (2cm and 1cm);
   \filldraw[fill=lightgray] (b) circle (1cm);
   \filldraw[fill=lightgray] (c) circle (1cm);
   \filldraw[fill=lightgray] (d) circle (1cm);
   \end{scope}
\end{pgfonlayer}
\fill (u) circle (0.1);
\fill (v) circle (0.1);
\fill (w) circle (0.1);
\foreach \x in {0,...,2}{\fill (a_\x) circle (0.1); \fill (b_\x) circle (0.1); \draw (a_\x) to (b_\x);}
\draw (b_0) to (a_1);
\draw (b_1) to (a_2);
\draw (v) to (a_0);
\draw (u) to[out=195,in=-15] (v);
\draw (u) to[out=195,in=-15] (w);
\end{tikzpicture}}
\caption{}
\label{fig:ABC1}
\end{minipage}
\hspace{0.5cm}
\begin{minipage}{.45\linewidth}
\centering\fbox{
\begin{tikzpicture}[scale=0.65,thick]
  \begin{pgfonlayer}{nodelayer}
   \node (a) at (4,3){\(A\)};
   \node (b) at (1,0){\(B\)};
   \node (c) at (4,0){\(C\)};
   \node (d) at (7,0){\(D\)};
   \node (b_5) at (0.5,-0.5){};
   \foreach \x in {0,...,2}{\node (a_\x) at ($(3.5,3)-(0.5*\x,0)$){}; \node (b_\x) at ($(1.5,0.5)-(0.5*\x,0)$){};}
   \begin{scope}[fill opacity = 0.5]
   \filldraw[fill=lightgray] (a) ellipse (2cm and 1cm);
   \filldraw[fill=lightgray] (b) circle (1cm);
   \filldraw[fill=lightgray] (c) circle (1cm);
   \filldraw[fill=lightgray] (d) circle (1cm);
   \end{scope}
  \end{pgfonlayer}
 \fill (b_5) circle (0.1);
 \foreach \x in {0,...,2}{\fill (a_\x) circle (0.1); \fill (b_\x) circle (0.1); \draw (a_\x) to (b_\x);}
 \draw (b_0) to (a_1);
 \draw (b_1) to (a_2);
 \draw (b_2) to (b_5);
\end{tikzpicture}}
\caption{}
\label{fig:ABC2}
\end{minipage}
\end{figure}

Further, no vertex in \(D\) can have degree more than 1 to \(B\cup C\); assume \(uv,uw\) are both edges with \(u\in D\), and \(v,w\in B\cup C\). We can find a \(P_l\) leaving \(v\), that misses \(w\), with alternate vertices in \(A\). This gives a \(P_l\) starting at \(w\) with only \(\left\lfloor\frac{l}{2}\right\rfloor-1\) vertices from \(A\), as in Figure \ref{fig:ABC1}. A vertex in \(C\) with degree 2 to \(B\cup C\) allows us to create a path in the same way, so our graph contains none of these.

Similarly, if \(l\) is even, an edge inside \(B\) allows us to create a \(P_l\) using only \(\left\lfloor\frac{l}{2}\right\rfloor-1\) vertices from \(A\), as in Figure \ref{fig:ABC2}, so in this case \(B\) must be empty. If \(l\) is odd, two edges in \(B\) allow us to create a \(P_l\) with only \(\left\lfloor\frac{l}{2}\right\rfloor-1\) vertices from \(A\), but a \emph{single edge} does not; this is where the \(c_l\) in the theorem arises.

We've now counted edges between \(B\) and \(C\) and between \(B\cup C\) and \(D\) respectively, and counted the edges inside each of \(B\), \(C\), and \(D\). We can use the degree conditions in their definitions to count edges from \(A\) to \(B\), \(C\), and \(D\). Putting these together, we see that

\begin{align*}
|E(G)|&\leq\binom{2\left\lfloor\frac{l}{2}\right\rfloor-1}{2}+\left(2\left\lfloor\frac{l}{2}\right\rfloor-1\right)\left(n-\left(2\left\lfloor\frac{l}{2}\right\rfloor-1\right)-|C|-|D|\right)\\
&\quad+\left(1+\left\lfloor\frac{l}{2}\right\rfloor-1\right)|C|+\left(1+\frac{l-2}{2}\right)|D|+c_l,\\
&\leq\binom{2\left\lfloor\frac{l}{2}\right\rfloor-1}{2}+\left(2\left\lfloor\frac{l}{2}\right\rfloor-1\right)\left(n-\left(2\left\lfloor\frac{l}{2}\right\rfloor-1\right)\right)\\
&\quad+\left(1-\left\lfloor\frac{l}{2}\right\rfloor\right)|C|+\left(\frac{l-2}{2}-2\left\lfloor\frac{l}{2}\right\rfloor+2\right)|D|+c_l.
\end{align*}

Since the coefficients of \(|C|\) and \(|D|\) above are negative, \(|E(G)|\) is maximized when \(C\) and \(D\) are empty; this gives our bound on \(|E(G)|\) as claimed. Further, since \(C\) and \(D\) must be empty to attain this bound it also shows that the extremal graph is \(G(n,2,l)=K_{2\left\lfloor\frac{l}{2}\right\rfloor-1}+E_{n-2\left\lfloor\frac{l}{2}\right\rfloor+1}\) with an extra edge in the empty class for odd \(l\), as claimed.

We have now established the base case \(k=2\). Somewhat surprisingly, the inductive step is easy to show.

Let \(G\) be a graph on \(n\) vertices with
\[m\geq\binom{k\left\lfloor\frac{l}{2}\right\rfloor-1}{2}+\left(k\left\lfloor\frac{l}{2}\right\rfloor
-1\right)\left(n-k\left\lfloor\frac{l}{2}\right\rfloor+1\right)~+~c_l\] edges, not containing \(k\cdot P_l\).

This graph does contain a \(P_l\), and from Lemma \ref{badlemma}, we can find \(\left\lfloor\frac{l}{2}\right\rfloor\) vertices with shared neighbourhood of size at least
\begin{align*}
n^\prime=&\frac{\binom{k\left\lfloor\frac{l}{2}\right\rfloor-1}{2}+\left(k\left\lfloor\frac{l}{2}\right\rfloor-1\right)\left(n-k\left\lfloor\frac{l}{2}\right\rfloor+1\right)}{\left(\left\lceil\frac{l}{2}\right\rceil+1\right)\binom{l}{\left\lfloor\frac{l}{2}\right\rfloor}}\\
&+\frac{c_l-\mathrm{ex}\left(n-l,\left(k-1\right)\cdot P_l\right)-\binom{l}{2}-\left(n-l\right)\left(\left\lfloor\frac{l}{2}\right\rfloor-1\right)}{\left(\left\lceil\frac{l}{2}\right\rceil+1\right)\binom{l}{\left\lfloor\frac{l}{2}\right\rfloor}}\\
=&\frac{\binom{k\left\lfloor\frac{l}{2}\right\rfloor-1}{2}+\left(k\left\lfloor\frac{l}{2}\right\rfloor-1\right)\left(n-k\left\lfloor\frac{l}{2}\right\rfloor+1\right)}{\left(\left\lceil\frac{l}{2}\right\rceil+1\right)\binom{l}{\left\lfloor\frac{l}{2}\right\rfloor}}\\
&+\frac{c_l-\binom{\left(k-1\right)\left\lfloor\frac{l}{2}\right\rfloor-1}{2}-\left(\left(k-1\right)\left\lfloor\frac{l}{2}\right\rfloor-1\right)\left(n-l-\left(k-1\right)\left\lfloor\frac{l}{2}\right\rfloor+1\right)}{\left(\left\lceil\frac{l}{2}\right\rceil+1\right)\binom{l}{\left\lfloor\frac{l}{2}\right\rfloor}}\\
&+\frac{-c_l-\binom{l}{2}-\left(n-l\right)\left(\left\lfloor\frac{l}{2}\right\rfloor-1\right)}{\left(\left\lceil\frac{l}{2}\right\rceil+1\right)\binom{l}{\left\lfloor\frac{l}{2}\right\rfloor}}\\
=&\frac{n+k\left\lfloor\frac{l}{2}\right\rfloor^2-\frac{3}{2}\left\lfloor\frac{l}{2}\right\rfloor^2+c_lk\left\lfloor\frac{l}{2}\right\rfloor-\frac{5+2c_l}{2}\left\lfloor\frac{l}{2}\right\rfloor-2c_l}{\left(\left\lceil\frac{l}{2}\right\rceil+1\right)\binom{l}{\left\lfloor\frac{l}{2}\right\rfloor}}\\
\geq&\frac{n-l}{\left(\left\lceil\frac{l}{2}\right\rceil+1\right)\binom{l}{\left\lfloor\frac{l}{2}\right\rfloor}}
\end{align*}
The second inequality is valid since \(n-l\geq2l+2l(k-1)\left(\left\lceil\frac{l}{2}\right\rceil+1\right)\binom{l}{\left\lfloor\frac{l}{2}\right\rfloor}\).
Write \(U\) for the set of vertices from Lemma \ref{badlemma}. Then \(G[V\setminus U]\) is a graph on \(n-\left\lfloor\frac{l}{2}\right\rfloor\) vertices and at least \(\mathrm{ex}(n-\left\lfloor\frac{l}{2}\right\rfloor,(k-1)\cdot P_l)\) edges. If we can find \((k-1)\cdot P_l\), then since \(n^\prime\geq kl\), we can find another \(P_l\) in \(G\) disjoint from these \(k-1\). Therefore there cannot be \(k-1\) disjoint copies of \(P_l\) in \(G[V\setminus U]\), so by the inductive hypothesis, \(G[V\setminus U]=G(n-\left\lfloor\frac{l}{2}\right\rfloor,k-1,l)\). Thus \(G=G(n,k,l)\).
\end{proof}

The above proof shows that our construction is optimal for \(n~=~\operatorname{O}(kl^22^l)\). We conjecture that this construction is optimal for \(n=\operatorname{O}(kl)\). We also note a comparison between Theorem \ref{longpath} for even paths and Theorem \ref{erdosgallai}: certainly if one forbids \(k\cdot P_{2l}\), then one is also forbidding \(P_{2kl}\). Thus an easy upper bound on \( \mathrm{ex}(n,k\cdot P_{2l})\) is \( \mathrm{ex}(n,P_{2kl})\). The difference between this bound and the precise result established above is relatively small, \((kl-1)(\frac{kl}{2})\). In particular, it is not dependent on \(n\) for fixed \(k\) and \(l\), despite the significant difference between the extremal graphs.

\section{Trees}
Throughout the following section, we need an analogue of Lemma \ref{p3lem} as a starting point. For longer paths, we used the Erd\H{o}s-Gallai result, Lemma \ref{erdosgallai}. The analogous result for trees is known as the Erd\H{o}s-S\'os Conjecture.
\subsection{The Erd\H{o}s-S\'os Conjecture}
\label{sec:erdsos}
We note that a path can be viewed as an extreme kind of tree - \(l-2\) vertices have degree two, and the two leaves of course have degree one. The opposite extreme is the star - one central vertex of degree \(l-1\), and the other \(k-1\) vertices are leaves. For both examples, it is easily seen that \( \mathrm{ex}(n,G)=\frac{l-2}{2}n\). Legend has it that Vera T. S\'os presented the proofs of these two results to her graph theory class in Budapest in 1962, and left the following conjecture as a homework problem; by now, this is known as the notoriously difficult Erd\H{o}s-S\'os Conjecture.

\begin{conj}\emph{(Erd\H{o}s-S\'os Conjecture)}
\label{erdossosconj}
For any tree \(T\) on \(l\) vertices, \( \mathrm{ex}(n,T)=\frac{l-2}{2}n\).
\end{conj}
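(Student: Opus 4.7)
The plan is to approach the Erd\H{o}s-S\'os conjecture through the standard combination of a minimum-degree reduction and a greedy tree embedding; I should emphasize at the outset that the conjecture is famously open in full generality, so what follows is a plausible attack rather than a completed proof. The lower bound is immediate: a disjoint union of copies of $K_{l-1}$ has $\frac{l-2}{2}n$ edges (when $(l-1)\mid n$) and contains no tree on $l$ vertices, since each component is too small to host $T$. For the upper bound, suppose $G$ has more than $\frac{l-2}{2}n$ edges. Then the average degree exceeds $l-2$, so iteratively deleting any vertex of degree at most $\lfloor(l-2)/2\rfloor$ preserves this strict inequality and leaves a nonempty induced subgraph $G'$ with $\delta(G')\geq \lceil(l-1)/2\rceil$. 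It therefore suffices to embed an arbitrary tree $T$ on $l$ vertices into any such $G'$.

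The embedding would be carried out by a greedy DFS on $T$. One would choose a root $r$ carefully --- ideally one maximizing the number of leaves encountered early in the DFS order --- map $r$ to any vertex of $G'$, and process $T$ in DFS order, mapping each new vertex $v$ of $T$ to an unused neighbor of the image of its parent. At each step we need an unused neighbor of some already-embedded image; since $\delta(G')\geq \lceil(l-1)/2\rceil$ and fewer than $l$ vertices of $G'$ have been used, a neighbor exists whenever the degree bound exceeds the number of previously used vertices. The naive count falls just short, so one must additionally use that once a leaf of $T$ is placed the image vertex is never asked to supply another neighbor, and that branching vertices of $T$ can be steered to vertices of $G'$ with many unused neighbors by choosing the order of the DFS adaptively. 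In the cases where this program has succeeded (paths via Theorem \ref{erdosgallai}, stars trivially, caterpillars, trees of small diameter, trees with a high-degree leaf-rich vertex) the extra structure of $T$ always fills the gap, and one expects an analogous argument in general.

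The main obstacle, and the reason the conjecture has resisted proof since 1962, is precisely that the extremal graphs saturate the minimum-degree bound exactly: the disjoint copies of $K_{l-1}$ have $\delta = l-2$ and $\frac{l-2}{2}n$ edges, so there is zero slack. Any successful argument must therefore be tight at essentially every step, and cannot close on loose counting. For sufficiently large $l$ and $n$ one can bypass the obstacle via the regularity method of Ajtai, Koml\'os, Simonovits, and Szemer\'edi: apply Szemer\'edi's regularity lemma to $G$, locate a dense regular configuration, and embed $T$ into it using the blow-up lemma; this program reportedly establishes the conjecture for all $l\geq l_0$ but does not cover small $l$. A proof in full generality remains open, and the present paper accordingly proceeds by assuming Conjecture \ref{erdossosconj} rather than establishing it.
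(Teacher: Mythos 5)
You were asked about a statement that the paper does not prove and does not claim to prove: it is stated as Conjecture \ref{erdossosconj}, the Erd\H{o}s--S\'os Conjecture, and the paper only ever uses it as a hypothesis (Theorem \ref{treethm} is explicitly conditional on it). So there is no proof in the paper to compare against, and you are right to present your text as an attack sketch rather than a completed argument. Two small points on the statement itself: as written with equality it implicitly needs \((l-1)\mid n\) (otherwise one should read it as the upper bound \(\mathrm{ex}(n,T)\le\frac{l-2}{2}n\)), and your lower-bound observation via disjoint copies of \(K_{l-1}\) is correct.

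As a proof attempt, however, the gap in your sketch is genuine and is exactly the known obstruction. After the deletion step you retain only \(\delta(G')\ge\lfloor l/2\rfloor\) (your \(\lceil(l-1)/2\rceil\) is the same number), and a minimum degree of that order cannot by itself support a greedy embedding of an arbitrary tree on \(l\) vertices: the star \(K_{1,l-1}\) already requires a vertex of degree \(l-1\), and for balanced trees the DFS can be starved as soon as the number of embedded vertices exceeds \(\delta(G')\), i.e.\ halfway through the tree. Your sentence ``the naive count falls just short, so one must additionally use\ldots'' is precisely where the argument ceases to be a proof; no adaptive ordering of the DFS is known to recover the deficit from these hypotheses alone, and the extremal configuration of disjoint \(K_{l-1}\)'s shows that any successful argument must be tight at every step, so the slack you hope to find in leaf placement is not there in general. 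This is why the partial results you cite (paths via Theorem \ref{erdosgallai}, stars, caterpillars, girth five, diameter four, trees with a leaf-rich vertex, and the announced Ajtai--Koml\'os--Simonovits--Szemer\'edi result for large trees) each lean on extra structure of \(T\) or of \(G\), or on regularity machinery, rather than on the minimum-degree greedy scheme. In short: your write-up is an accurate survey of the state of the art, but it neither proves the statement nor could be repaired to do so along the lines indicated, and the paper itself deliberately leaves the statement as a conjecture.
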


In 2008, a proof of the conjecture was announced for very large trees by Ajtai, Koml\'os, Simonovits, and Szemer\'edi. For small trees, however, the conjecture is mostly open. There is a sequence of results in the direction of the full theorem for smaller trees. We present a representative sample of these results here, which is certainly only the tip of the iceberg. Many more partial results related to the Erd\H{o}s-S\'os Conjecture exist; see for example \cite{dobson2},\cite{wozniak}. The first result here establishes the conjecture for graphs of large girth and is due to Dobson \cite{dobson}.

\begin{thm}
If \(T\) is a tree on \(l\) vertices, and \(G\) is a graph with girth at least five and minimum degree \(\delta\geq\frac{l}{2}\), then \(G\) contains \(T\). Thus Conjecture \ref{erdossosconj} holds for graphs of girth at least 5.
\end{thm}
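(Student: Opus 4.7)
The plan is induction on $l$. The base cases $l \in \{1,2\}$ are immediate, since $\delta(G) \geq 1$ implies $G$ contains an edge. For the inductive step, let $v$ be a leaf of $T$ with unique neighbor $u$, and set $T' := T - v$, a tree on $l-1$ vertices. Since $\delta(G) \geq l/2 \geq (l-1)/2$, the inductive hypothesis applied to $T'$ yields an embedding $\phi \colon V(T') \hookrightarrow V(G)$; it then remains to find $\phi(v) \in N_G(\phi(u)) \setminus \phi(V(T'))$.

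A direct pigeonhole does not suffice, since $|N_G(\phi(u))| \geq l/2$ while $|\phi(V(T'))| = l-1$. The key will be to use girth at least $5$ to restrict the \emph{competitor set}
\[ S := \{\, z \in V(T') \setminus \{u\} : \phi(z) \in N_G(\phi(u)) \,\}. \]
For $z_1, z_2 \in S$ distinct, let $\pi$ be the $T'$-path from $z_1$ to $z_2$. The image $\phi(\pi)$ together with the two edges $\phi(u)\phi(z_1)$ and $\phi(u)\phi(z_2)$ forms a closed walk in $G$ of length $d_{T'}(z_1,z_2)+2$. Provided $u$ does not lie on $\pi$, the walk visits pairwise distinct vertices and is a genuine cycle, so girth $\geq 5$ forces $d_{T'}(z_1,z_2) \geq 3$.

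I would then root $T'$ at $u$ and analyze $S$ subtree-by-subtree. In each subtree $T_c$ hanging off a child $c$ of $u$, the set $S \cap V(T_c)$ is $3$-independent in $T_c$, so its closed neighborhoods (within $T_c$) are pairwise disjoint, yielding
\[ |S \cap V(T_c)| \;+\; \sum_{z \in S \cap V(T_c)} \deg_{T_c}(z) \;\leq\; |V(T_c)|. \]
Summing over the children $c$ of $u$ and using $\sum_c |V(T_c)| = l-2$, together with the observation that every $z \in S$ has at least one $T_c$-neighbor unless its host subtree is a single vertex, the desired bound $|S| < l/2$ follows. Consequently $N_G(\phi(u)) \not\subseteq \phi(V(T'))$, and $\phi$ extends to $v$.

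The main obstacle I expect is that this bookkeeping becomes tight when $u$ has many singleton subtrees in $T'$ (i.e.\ many leaf children), because such subtrees contribute fully rather than at rate $1/2$ to the bound. The remedy is to strengthen the inductive hypothesis so that the embedding of $T'$ is chosen \emph{extremally} --- for example, maximizing $\deg_G(\phi(u)) - |N_G(\phi(u)) \cap \phi(V(T'))|$ among all embeddings of $T'$. A rotation/swap argument, exchanging $\phi(u)$ with a conflicting $\phi(z)$ when no free neighbor is available, should produce a partial embedding with strictly larger extremal value, and the girth $\geq 5$ hypothesis guarantees that such a swap is always possible. This extremal step is the technical heart of the argument; the remainder is careful accounting on the tree $T'$.
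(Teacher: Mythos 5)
First, a point of comparison: the paper does not prove this theorem at all --- it is quoted as background from Brandt and Dobson \cite{dobson} --- so your argument has to stand on its own, and as written it has a genuine gap, exactly the one you flag yourself. The girth argument constrains only pairs \(z_1,z_2\in S\) whose \(T'\)-path avoids \(u\): every \(T'\)-neighbour of \(u\) lies in \(S\) automatically (its image is a used neighbour of \(\phi(u)\) by construction), and two competitors lying in subtrees of distinct children of \(u\) are likewise unconstrained, since their path passes through \(u\). (There is even a single-vertex constraint you did not use --- any \(z\in S\) with \(2\le d_{T'}(z,u)\le 3\) closes a \(C_3\) or \(C_4\) through \(\phi(u)\) --- but it says nothing about children of \(u\).) Hence when \(u\) has many leaf-children, \(|S|\) can be as large as \(l-2\), the bound \(|S|<l/2\) is simply not available, and the whole burden falls on the unproved ``extremal embedding plus rotation'' step, i.e.\ on the heart of the matter.

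Second, and decisively, that step cannot be carried out for the statement as quoted, because the statement is false without a further hypothesis: take \(T=K_{1,3}\) (so \(l=4\)) and \(G=C_5\), which has girth \(5\) and \(\delta=2\ge l/2\) but maximum degree \(2\), hence contains no claw; the Petersen graph and \(K_{1,4}\) give a \(3\)-regular example. The theorem Brandt and Dobson actually prove carries an additional condition, in essence that \(G\) possess a vertex of degree large enough to host the maximum degree of \(T\) (in the derivation of the Erd\H{o}s--S\'os conjecture for girth five, the standard reduction to a subgraph of minimum degree greater than \((l-2)/2\) also supplies a vertex of degree at least \(l-1\)), and any correct proof must invoke such a condition. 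Your sketch uses only \(\delta\ge l/2\) and the girth, which is precisely why the counting collapses in the star-like case: no swap or re-embedding argument can conjure a high-degree vertex in a regular graph. To salvage the approach you would need to add the maximum-degree hypothesis explicitly and use it --- for instance by rooting the embedding at a vertex of \(G\) of degree at least \(\Delta(T)\) and protecting that vertex throughout the induction --- which is the part the proposal leaves entirely open.
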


Similarly, Sacl\'e and Wo\'zniak \cite{saclewozniak} proved that whenever \(G\) is a graph with at least \(\frac{l-2}{2}n\) edges and no \(C_4\), \(G\) contains any tree on \(l\) vertices. In 2005, McLennan \cite{mclennan} proved the Erd\H{o}s-S\'os bound for trees of diameter at most four.

The Erd\H{o}s-S\'os Conjecture has also been proven for caterpillars; this result is attributed to Perles in \cite{mp}.  Later, Sidorenko \cite{sidorenko} showed that the Erd\H{o}s-S\'os Conjecture holds for trees of order \(l\) containing a vertex which is the parent of at least \(\frac{l-1}{2}\) leaves.

\subsection{Forests of Equibipartite Trees}
\label{sec:trees}
Our proof of Theorem \ref{longpath} can be adapted to work on a significantly larger class of graphs. A key element of our proof was finding a set of vertices which intersected every long path in at least half its vertices. This continues to be an essential idea, and thus we restrict ourselves to trees which have the same number of vertices in each vertex class, when viewed as a bipartite graph. We call such trees \emph{equibipartite}, and a forest in which each component is an equibipartite tree is called an equibipartite forest. Clearly any equibipartite tree or equibipartite forest has an even number of vertices.

If we allow ourselves the considerable benefit of assuming that Erd\H{o}s-S\'os holds for all equibipartite trees, we can determine the extremal number for any equibipartite forest, for large \(n\). There is a slight difference in the extremal number and the structure of the extremal graph depending on whether the forest admits a perfect matching.

\begin{thm}
\label{treethm}
Let \(H\) be an equibipartite forest on \(2l\) vertices which is comprised of at least two trees. If the Erd\H{o}s-S\'os Conjecture holds, then for \(n\geq3l^2+32l^5\binom{2l}{l}\),
\[\mathrm{ex}(n,H)=\begin{cases} \binom{l-1}{2}+(l-1)(n-l+1) \textrm{, if }H\textrm{ admits a perfect matching}\\ (l-1)(n-l+1) \textrm{ otherwise.}\end{cases}\]
\end{thm}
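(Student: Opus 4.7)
The plan is to mirror the proof of Theorem~\ref{longpath}, replacing paths by equibipartite trees and the Erd\H{o}s--Gallai bound by the (conjectural) Erd\H{o}s--S\'os bound for each tree component. Write $H = T_1 \cup \cdots \cup T_c$ with $|V(T_i)| = 2l_i$ and $\sum l_i = l$, fix a bipartition $V(H) = A_H \cup B_H$ with $|A_H| = |B_H| = l$, and proceed by induction on the number of components $c$, with base case $c = 2$.

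I first verify both constructions are $H$-free. In $K_{l-1,n-l+1}$ every edge crosses the bipartition, so any embedding places exactly $l_i$ vertices of each $T_i$ into each host class; summing, the side of size $l-1$ would have to contain $l$ vertices, impossible. For $K_{l-1}+E_{n-l+1}$ (the PM construction), every edge of this graph touches the central clique, so the image of $V(H)$ inside the clique is a vertex cover of $H$; by K\"onig's theorem the minimum vertex cover of the bipartite graph $H$ equals its maximum matching, which is $l$ when $H$ has a PM, exceeding the clique size $l-1$. In the non-PM case some vertex cover of $H$ has size at most $l-1$, so the clique edges permit an embedding --- which is precisely why the non-PM extremal graph must omit them.

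For the base case $c = 2$, suppose $|E(G)| > \mathrm{ex}(n,H)$ while $H \not\subseteq G$. For $i \in \{1,2\}$ apply Lemma~\ref{badlemma} with $F_1 = T_i$, $F_2 = T_{3-i}$, $r = 2l_i$, $t = l_i$; Erd\H{o}s--S\'os gives $\mathrm{ex}(n-r, T_{3-i}) \leq (l_{3-i}-1)(n-r)$, and the lemma yields that every copy of $T_i$ in $G$ contains an $l_i$-subset with common neighbourhood of size $n' = \Omega(n)$. Collect these into hypergraphs $\mathcal{H}_1, \mathcal{H}_2$: if disjoint $E_1 \in \mathcal{H}_1$ and $E_2 \in \mathcal{H}_2$ existed, picking $l_i$ further distinct vertices from each common neighbourhood (feasible as $n' \geq 2l$) would produce vertex-disjoint copies of $T_1$ and $T_2$, embedding $H$ --- contradiction. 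So $\mathcal{H}_1, \mathcal{H}_2$ are cross-intersecting. A combinatorial argument exploiting this structure, together with the analog of Lemma~\ref{smallhgraph} (using Erd\H{o}s--S\'os in place of Erd\H{o}s--Gallai), extracts a common cover $A \subseteq V(G)$ with $|A| \leq l-1$. Partition $V \setminus A$ by degree into $A$, mirroring the $B, C, D$ decomposition of Theorem~\ref{longpath}; any internal edge of $V \setminus A$ combined with re-embeddings through $A$'s large neighbourhoods yields $H$, forcing $V \setminus A$ to be independent. The PM dichotomy applies to $G[A]$: if $H$ has no PM, a vertex cover of $H$ of size $\leq l-1$ maps into $A$ while the $\geq l+1$ independent vertices map into $V \setminus A$, so any clique edge in $A$ permits an embedding, forcing $G[A]$ empty; if $H$ has a PM, $G[A]$ may be the full $K_{l-1}$.

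For the inductive step with $c \geq 3$, pick any component $T$ and apply Lemma~\ref{badlemma} with $F_1 = T$, $F_2 = H \setminus T$, $t = l_T$; the inductive hypothesis bounds $\mathrm{ex}(n - 2l_T, H \setminus T)$ and yields an $l_T$-subset $U$ with common neighbourhood of size $n' = \Omega(n)$. Removing $U$ loses at most $l_T(n-1)$ edges, and an algebraic comparison of the extremal formulas shows $|E(G[V\setminus U])| > \mathrm{ex}(n - l_T, H \setminus T)$, so by induction $H \setminus T \subseteq G[V\setminus U]$; combined with a new copy of $T$ built from $U$ and $l_T$ common-neighbourhood vertices avoiding this $H\setminus T$-copy (feasible as $n' \geq 2l$), this embeds $H$ --- contradiction. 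Equality in the edge count then forces $G$ to be the claimed extremal graph. The main obstacle lies in the base case: extracting the common cover from the cross-intersecting system $\{\mathcal{H}_1, \mathcal{H}_2\}$ of different uniformities $l_1, l_2$ is more intricate than Theorem~\ref{longpath}'s 2-intersecting analysis, and sharpening the Erd\H{o}s--S\'os bound on $G[V\setminus A]$ down to zero requires a careful $B, C, D$-style analysis together with the K\"onig-theoretic PM/non-PM case split.
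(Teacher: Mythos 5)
Your outline leaves its central step unproved, and it is precisely the step that does not transfer from paths to trees. In the base case you build the two cross-intersecting families \(\mathcal{H}_1,\mathcal{H}_2\) and then assert that ``a combinatorial argument'' extracts a cover \(A\) with \(|A|\leq l-1\); but the corresponding extraction in Theorem \ref{longpath} leans entirely on path structure (flattening \(\mathcal{H}\) to \(G'\), running a cycle through \(E_1\cup E_2\), the two cases ruling out \(1\)-intersections), and none of that has an analogue for general equibipartite trees, let alone for cross-intersecting families of two different uniformities \(l_1\neq l_2\). The paper does not attempt this: it fixes a \emph{single} copy of \(H'\) (the forest minus a smallest component \(H_1\)), uses Lemma \ref{badlemma} once to get one set \(A\) of \(l'\) vertices with common neighbourhood of size \(\Omega(n)\), and then defines \(B\) as the set of vertices of degree at least \(\frac{n-3l^2}{l_1+1}\). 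The cover property comes from counting, not from intersection structure: any copy of \(H_1\) is incident to at least \(l_1n-3l^2\) edges, hence must contain \(l_1\) vertices of \(A\cup B\); and by re-embedding \(H'\) through low-degree common neighbours of \(A\) one forces every vertex of \(B\) to have degree at least \(n-c'\), whence \(|B|\leq l_1-1\) (else \(l_1\) such vertices have a huge common neighbourhood and \(H_1\cup H'\subseteq G\)). So the cover \(|A\cup B|\leq l-1\) is obtained by a degree-threshold argument that your proposal replaces with a missing lemma.

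Two further steps fail as written. First, in the non-perfect-matching case the upper bound requires showing that a \emph{single} edge inside the \((l-1)\)-set already creates \(H\); your K\H{o}nig argument only says \(H\) has some vertex cover of size \(\leq l-1\), and such a cover may induce many edges of \(H\), so it cannot be mapped onto a set spanning one edge. What is needed is the paper's Lemma \ref{noPerfLem}: an equibipartite tree with no perfect matching admits an unbalanced partition whose larger class is independent and whose smaller class induces exactly one edge, proved via a minimal violator of Hall's condition (and using equibipartiteness in an essential way). Second, your inductive step for \(c\geq 3\) does not survive the arithmetic: deleting the \(l_T\)-set \(U\) can cost up to \(l_T(n-l_T)+\binom{l_T}{2}\) edges, and \((l-1)(n-l+1)\) minus this quantity is strictly smaller than \(\mathrm{ex}(n-l_T,H\setminus T)\) in the non-matching case, and only equal (never larger) in the matching case, so you cannot conclude \(H\setminus T\subseteq G[V\setminus U]\) by induction. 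The paper avoids this entirely: induction on the number of components is invoked only to guarantee that copies of sub-forests such as \(H'\) exist in graphs with many edges, while the extremal count and structure are established in one pass for all components via the \(A\cup B\), \(D\), \(E\) decomposition.
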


\begin{figure}[ht]
\centering
\begin{minipage}{0.45\linewidth}
\centering\fbox{
\begin{tikzpicture}[scale=0.74,fill opacity=1.0,thick]

        \node (t_1) at (0,2){};
        \node (t_2) at (1,2){};
    \node (t_3) at (2,2){};
    \node (t_4) at (4,2){};
    \node(tellip) at (3,2){$\cdots$};
    \node(tnum) at (2,3){$K_{l-1}$};
    \node (b_1) at (-.5,0){};
    \node (b_2) at (.5,0){};
    \node (b_3) at (1.5,0){};
    \node (b_4) at (2.5,0){};
    \node (b_5) at (4.5,0){};
    \node (b_6) at (5.5,0){};
    \node(bellip) at (3.5,0){$\cdots$};
    \node(bnum) at (2,-1){$E_{n-l+1}$};
    \foreach \x in {1,...,4} {\fill (t_\x) circle (0.1);}
    \foreach \x in {1,...,6} {\fill (b_\x) circle (0.1);}
    \foreach \x/\y in {1,...,4}{\foreach \y in {1,...,6}{\draw (t_\x) to (b_\y);}}
    \draw (t_1) to [in=155,out=25] (t_2);
    \draw (t_1) to [in=155,out=25] (t_3);
    \draw (t_1) to [in=155,out=25] (t_4);
    \draw (t_2) to [in=155,out=25] (t_3);
    \draw (t_2) to [in=155,out=25] (t_4);
    \draw (t_3) to [in=155,out=25] (t_4);

\end{tikzpicture}}
\caption{}
\label{fig:experfmatch}
\end{minipage}
\hspace{0.5cm}
\begin{minipage}{0.45\linewidth}
\centering\fbox{
\begin{tikzpicture}[scale=0.75,fill opacity=1.0,thick]
 
        \node (t_1) at (0,2){};
        \node (t_2) at (1,2){};
    \node (t_3) at (2,2){};
    \node (t_4) at (4,2){};
    \node(tellip) at (3,2){$\cdots$};
    \node(tnum) at (2,3){$E_{l-1}$};
    \node (b_1) at (-.5,0){};
    \node (b_2) at (.5,0){};
    \node (b_3) at (1.5,0){};
    \node (b_4) at (2.5,0){};
    \node (b_5) at (4.5,0){};
    \node (b_6) at (5.5,0){};
    \node(bellip) at (3.5,0){$\cdots$};
    \node(bnum) at (2,-1){$E_{n-l+1}$};
    \foreach \x in {1,...,4} {\fill (t_\x) circle (0.1);}
    \foreach \x in {1,...,6} {\fill (b_\x) circle (0.1);}
  \begin{pgfonlayer}{edgelayer}
  \end{pgfonlayer}
    \foreach \x/\y in {1,...,4}{\foreach \y in {1,...,6}{\draw (t_\x) to (b_\y);}}
\end{tikzpicture}}
\caption{}
\label{fig:exnoperfmatch}
\end{minipage}
\end{figure}

\begin{remark*}
The extremal graphs here are \(K_{l-1}+E_{n-l+1}\) for any forest with a perfect matching, and \(E_{l-1}+E_{n-l+1}\) for any forest with no perfect matching, as in Figures \ref{fig:experfmatch}, \ref{fig:exnoperfmatch}. To prove the eventual extremal number for equibipartite trees as in Theorem \ref{treethm}, we do not need the full strength of the Erd\H{o}s-S\'os Conjecture; we only need that the Erd\H{o}s-S\'os Conjecture is true for the trees appearing in the forest \(H\).  In fact, it suffices to know that \(\mathrm{ex}(n,T)=\frac{|T|-2}{2}n+o(n)\) for any of the equibipartite trees \(T\subseteq H\). In this case, however, the bound on \(n\) for which the result holds is much worse. We also note that again in the statement of the theorem we have suppressed lower order terms in the lower bound on \(n\); here the lower order terms from the proof are unnecessarily complicated, and we leave them out.
\end{remark*}

\begin{figure}[ht]
\centering
\begin{minipage}{0.45\linewidth}
\centering\fbox{
\begin{tikzpicture}[scale=0.65,thick]
  \begin{pgfonlayer}{nodelayer}
   \node (arr_1) at (3.5, 1.5){};
   \node (arr_2) at (4.5, 1.5){};
   \node (l_1) at (2,3){};
   \node (l_x) at (2,3.5){\(x\)};
   \node (l_2) at (1,2){};
   \node (l_y) at (1,2.5){\(y\)};
   \node (l_3) at (3,2){};
   \node (l_4) at (2,1){};
   \node (l_5) at (1,0){};
   \node (l_6) at (0,1){};

   \node (r_1) at (6,2){};
   \node (r_x) at (6,2.5){\(x\)};
   \node (r_2) at (7,2){};
   \node (r_y) at (7,2.5){\(y\)};
   \node (r_3) at (5,1){};
   \node (r_4) at (6,1){};
   \node (r_5) at (7,1){};
   \node (r_6) at (8,1){};

\foreach \x in {1,...,6}{\fill (l_\x) circle (0.1); \fill (r_\x) circle (0.1);}
   \begin{scope}[fill opacity = 0.5]
   \end{scope}
  \end{pgfonlayer}
 \draw [connector] (arr_1) to (arr_2);
	\draw (l_1) to (l_2);
	\draw (l_1) to (l_3);
	\draw (l_2) to (l_4);
	\draw (l_2) to (l_6);
	\draw (l_5) to (l_6);

	\draw (r_1) to (r_2);
	\draw (r_1) to (r_3);
	\draw (r_2) to (r_4);
	\draw (r_2) to (r_6);
	\draw (r_5) to (r_6);
\end{tikzpicture}}
\caption{}
\label{fig:perfmatch}
\end{minipage}
\hspace{0.5cm}
\begin{minipage}{.45\linewidth}
\centering\fbox{
\begin{tikzpicture}[scale=0.65,thick]
  \begin{pgfonlayer}{nodelayer}
   \node (arr_1) at (3.5, 1){};
   \node (arr_2) at (4.5, 1){};
   \node (l_1) at (2,2){};
   \node (l_x) at (2,2.5){\(x\)};
   \node (l_2) at (1,1){};
   \node (l_y) at (1,1.5){\(y\)};
   \node (l_3) at (3,1){};
   \node (l_4) at (2,1){};
   \node (l_5) at (0,0){};
   \node (l_6) at (2,0){};

   \node (r_1) at (6,1.5){};
   \node (r_x) at (6,2){\(x\)};
   \node (r_2) at (7,1.5){};
   \node (r_y) at (7,2){\(y\)};
   \node (r_3) at (5,.5){};
   \node (r_4) at (6,.5){};
   \node (r_5) at (7,.5){};
   \node (r_6) at (8,.5){};

\foreach \x in {1,...,6}{\fill (l_\x) circle (0.1); \fill (r_\x) circle (0.1);}
  \end{pgfonlayer}
 \draw [connector] (arr_1) to (arr_2);
	\draw (l_1) to (l_2);
	\draw (l_1) to (l_3);
	\draw (l_1) to (l_4);
	\draw (l_2) to (l_5);
	\draw (l_2) to (l_6);

	\draw (r_1) to (r_2);
	\draw (r_1) to (r_3);
	\draw (r_1) to (r_4);
	\draw (r_2) to (r_5);
	\draw (r_2) to (r_6);
\end{tikzpicture}}
\caption{}
\label{fig:noperfmatch}
\end{minipage}
\end{figure}

\begin{lem}\label{PerfLem} Let \(H\) be a equibipartite tree on \(2l\) vertices. If \(H\) contains a perfect matching, then every partition of \(V(H)\) into two classes of different sizes is such that the larger class induces at least one edge.\end{lem}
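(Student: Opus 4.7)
The plan is to use the perfect matching together with a pigeonhole argument on the sizes of the two classes. Let $M$ be a perfect matching of $H$; since $|V(H)|=2l$, $M$ consists of exactly $l$ edges. Suppose $V(H)=X\cup Y$ is a partition into two classes with $|X|\neq|Y|$, and, without loss of generality, $|X|>|Y|$. Because $|X|+|Y|=2l$ and the two sizes are distinct, we must have $|X|\geq l+1$.

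Next, I would partition the vertices of $X$ according to which edge of $M$ they lie on. If every edge of $M$ contributed at most one endpoint to $X$, then we would get $|X|\leq |M|=l$, contradicting $|X|\geq l+1$. Hence some edge $e\in M$ has both of its endpoints in $X$, and $e$ is therefore an edge induced by $X$, which is exactly what we need.

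There is essentially no obstacle: the statement is a straightforward pigeonhole consequence of the existence of a perfect matching, and does not use any additional structure of the tree beyond having $2l$ vertices and admitting such a matching. The equibipartiteness hypothesis only serves to make the notion of a perfect matching meaningful for the ambient class of forests; it is not needed in the argument itself.
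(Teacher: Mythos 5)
Your proof is correct and uses essentially the same pigeonhole argument as the paper: the paper counts that at most $|V_1|<l$ matching edges can meet the smaller class, so some matching edge lies in the larger class, which is the mirror image of your count that the larger class has at least $l+1$ vertices and so cannot meet each of the $l$ matching edges at most once. Your closing remark that equibipartiteness is not needed here is also accurate.
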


\begin{proof}
If \(H\) contains a perfect matching, \(M\subseteq E(H)\), then for any partition of \(V(H)\) into nonequal classes, \(|V_1|<|V_2|\), the number of edges in \(M\) which meet \(V_1\) is at most \(|V_1|<l\), and so some edge lies inside \(V_2\).\end{proof}

\begin{lem}\label{noPerfLem} Let \(H\) be a equibipartite tree on \(2l\) vertices. If \(H\) does not contain a perfect matching, then there exists a partition of \(V(H)\) into two classes of different sizes such that the larger class induces no edges and the smaller class induces exactly one edge.\end{lem}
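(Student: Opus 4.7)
The plan is to find a single edge $e^* = a^* b^*$ of $H$ (with $a^* \in A$, $b^* \in B$) whose removal splits $H$ into subtrees $T_1 \ni b^*$ and $T_2 \ni a^*$ with $|T_1 \cap A| > |T_1 \cap B|$, and then take
\[V_1 = (T_1 \cap A) \cup (T_2 \cap B), \qquad V_2 = (T_2 \cap A) \cup (T_1 \cap B).\]
Because $H$ is bipartite and $e^*$ is the unique $H$-edge between $T_1$ and $T_2$, one checks directly that $V_1$ is independent, that the only edge inside $V_2$ is $e^*$, and that $|V_1| - |V_2| = 2(|T_1 \cap A| - |T_1 \cap B|) \geq 2$, giving the required partition.

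To locate $e^*$, apply Hall's theorem: since $H$ has no perfect matching, some $S \subseteq A$ satisfies $|N_H(S)| < |S|$, and I choose $S$ of minimum cardinality. Minimality forces $N(S \setminus \{v\}) = N(S)$ for every $v \in S$, so every vertex of $N(S)$ has at least two neighbors in $S$; combined with the forest bound $e(T') \leq |S| + |N(S)| - 1$ applied to $T' := H[S \cup N(S)]$, this pins down $|N(S)| = |S|-1$, makes $T'$ a connected subtree of $H$, and forces every vertex of $N(S)$ to have $T'$-degree exactly two. Because every neighbor of $S$ lies in $N(S)$, every $H$-edge leaving $V(T')$ must run from $N(S)$ to $A \setminus S$.

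Label these attachment edges $e_1, \ldots, e_k$ (there is at least one, since $T' \subsetneq H$ and $H$ is connected) and write $P_j$ for the pendant subtree of $H - V(T')$ attached via $e_j$. Letting $T^{(i)}_1$ be the component of $H - e_i$ containing $T'$ and $f(e_i) := |T^{(i)}_1 \cap A| - |T^{(i)}_1 \cap B|$, a direct expansion gives
\[f(e_i) = (|S| - |N(S)|) + \sum_{j \neq i}\bigl(|P_j \cap A| - |P_j \cap B|\bigr),\]
and then using $\sum_j |P_j \cap A| = l - |S|$ and $\sum_j |P_j \cap B| = l - |N(S)|$ one obtains $\sum_{i=1}^{k} f(e_i) = k + (k-1)(|N(S)| - |S|) = 1$. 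Hence some $f(e^*) \geq 1$, which is the desired edge. The main technical point is establishing the structural properties of $T'$ for a minimum Hall-violating $S$; once that is in hand, the remainder is a clean signed double count across the attachment edges.
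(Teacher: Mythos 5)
Your proof is correct and follows essentially the same route as the paper's: a minimum Hall-violating set $S\subseteq A$, connectivity of $H[S\cup N(S)]$, pendant components each attached by a single edge, selection of a pendant component with more $B$- than $A$-vertices, and the same cross-bipartition partition (your $V_2$ is exactly the paper's $V_{x,y}$). The only cosmetic differences are that you derive some extra structure ($|N(S)|=|S|-1$ and the degree-two condition on $N(S)$) that is not needed, and you phrase the existence of the good attachment edge as a signed sum over attachment edges rather than the paper's direct pigeonhole on $|B\setminus N(S)|>|A\setminus S|$.
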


\begin{proof} Consider \(H\) as a bipartite graph with bipartition \(V(H)=(A,B)\). Since \(H\) contains no perfect matching, there is a set \(S\subseteq A\) for which Hall's condition (see, e.g., \cite{mgt}) fails. If we take \(S\) minimal, then \(H[S\cup N(S)]\) is connected, as otherwise one of its components would fail Hall's condition. Consider \(H[(A\setminus(S))\cup (B\setminus N(S))]\). Each component of this graph is joined to \(N(S)\) by a single edge. Since the union of these components has larger intersection with \(B\) than with \(A\), at least one of the components does. Let \(C\) be such a component, and let \(xy\) be the unique edge between \(C\) and \(N(S)\), with \(x\in C\) and \(y\in N(S)\).

Consider the partition \((C,V(H)\setminus{C})\). Then taking the set of vertices \(V_{x,y}\) which are in the same bipartite class as \(x\) in \(C\) or in the same bipartite class as \(y\) in \(V(H)\setminus C\) as one class of our new partition, and \(V(H)\setminus V_{x,y}\) as the other forms a partition of \(V(H)\) with exactly one edge in \(V_{x,y}\), and none in \(V(H)\setminus V_{x,y}\).

Since our tree is equibipartite, \(|V_{x,y}\cap (V(H)\setminus C)|+|\left(V(H)\setminus V_{x,y}\right)\cap C|=l\). By our definition of \(C\), \(|V_{x,y}\cap C|<|\left(V(H)\setminus V_{x,y}\right)\cap C|\), so we have \(|V_{x,y}|=|V_{x,y}\cap C|+|V_{x,y}\cap (V(H)\setminus C)|<|V_{x,y}\cap C|<|\left(V(H)\setminus V_{x,y}\right)\cap C|=l\), as required.

\end{proof}
See Figures \ref{fig:perfmatch}, \ref{fig:noperfmatch} for an example partition of a trees with and without a perfect matching, respectively.

\begin{proof}[Proof of Theorem \ref{treethm}]
Let \(H\) have components \(H_1,H_2,\ldots,H_k\), each on \(2l_1, 2l_2, \dots, 2l_k\) vertices respectively, and \(G\) be a graph on \(n\) vertices with \(m\) edges which does not contain \(H\), and with \(m\geq(l-1)(n-l+1)\). Without loss of generality, \(l_1\leq l_i\), for each \(i\). For notational ease, we also define \(H^\prime=H_2\cup\ldots\cup H_k\) and \(l^\prime=\frac{1}{2}|H^\prime|=l-l_1\).


As \(n\geq l^2\), \(m\geq\mathrm{ex}(n,H^\prime)\) by induction (or Erd\H{o}s-S\'os, if \(H^\prime\) is a tree), and so we can find a copy of \(H^\prime\subseteq G\). As in the proof of Lemma \ref{badlemma}, for any copy of \(H^\prime\) we can bound from below the size of the set \(E'\) of edges between \(H^\prime\) and \(G\setminus H^\prime\) by \(m-\binom{2l^\prime}{2}-\textrm{ex}\left(n-2l^\prime,H_1\right)\). By the Erd\H{o}s-S\'os Conjecture, this is at least \(\left(l-1\right)\left(n-l+1\right)-\binom{2l^\prime}{2}-\left(n-2l^\prime\right)\left(l_1-1\right)\geq l^\prime n-3l^2\).

Consider the set of vertices \(X=\left\{v\in G\setminus H^\prime: \left|N(v)\cap H^\prime\right|\geq l^\prime\right\}\). Then
\[2l^\prime\left|X\right|+\left(l^\prime-1\right)\left(n-2l^\prime-\left|X\right|\right)\geq\left|E'\right|\geq l^\prime n-3l^2.\]

Thus \(\left|X\right|\geq\frac{n-3l^2}{l^\prime+1}\). As there are only \(\binom{2l^\prime}{l^\prime}\) sets of \(l^\prime\) vertices in \(H^\prime\), we can find a set \(A\) of \(l^\prime\) vertices in \(H^\prime\) with at least \(n^\prime=\frac{n-3l^2}{\left(l^\prime+1\right)\binom{2l^\prime}{l^\prime}}\) common neighbours.

Interchangine the roles of \(H_1\) and \(H^\prime\), for any \(H_1\) we similarly bound from below the size of the set \(E_1\) of edges between \(H_1\) and \(G\setminus H_1\) by \(m-\binom{2l_1}{2}-\textrm{ex}\left(n-2l_1,H^\prime\right)\). Note that \(n-2l_1\) is much larger than needed in the condition of the inductive hypothesis, and so
\begin{align}
|E_1|\geq&\left(l-1\right)\left(n-l+1\right)-\binom{2l_1}{2}-\left(n-l^\prime-2l_1+1\right)\left(l^\prime-1\right)-\binom{l^\prime-1}{2}\notag\\
&\geq l_1 n-3l^2.\label{H1s}
\end{align}

With this in mind, we define the following two sets of vertices:


\begin{align*}
B&=\left\{w\in G|w\not\in A\textrm{ and } d_G(w)\geq\frac{n-3l^2}{l_1+1}\right\}\\
C&=G\setminus(A\cup B)
\end{align*}

Now, any copy of \(H_1\) in \(G\) must contain at least \(l_1\) vertices from \(A\cup B\), as otherwise the total degree of vertices on \(H_1\) is less than \(\left(l_1+1\right)\frac{n-3l^2}{l_1+1}+\left(l_1-1\right)n\), contradicting (\ref{H1s}) above.

As a rough bound on the number of edges in \(G\), we note that if \(G\) contained more than \(2ln\) edges,  we can find a copy of \(H^\prime\) by induction (or by the Erd\H{o}s-S\'os Conjecture if \(H^\prime\) is a single tree). Removing this copy of \(H^\prime\) leaves a graph on \(n-2l^\prime\) vertices with more than \(2l_1n\geq2l_1\left(n-2l^\prime\right)\) edges, since each vertex is of course adjacent to at most \(n\) edges. Again by Conjecture \ref{erdossosconj}, we can find a copy of \(H_1\). Thus our graph can have at most \(2ln\) edges.

This means that for any \(c>0\), there are at most \(\frac{4ln}{c}\) vertices of degree at least \(c\). Choosing \(c=\frac{8ln}{n^\prime}\leq\frac{8l\left(l^\prime+1\right)\binom{2l^\prime}{l^\prime}n}{n-3l^2}\), there are at least \(\frac{n^\prime}{2}\) common neighbours of \(A\) with degree at most \(c\). Since \(n\geq 6l^2\),  \(c\leq16l\left(l^\prime+1\right)\binom{2l^\prime}{l^\prime}\). Then since \(\frac{n^\prime}{2}\gg l^\prime\), we can find a copy of \(H^\prime\) with \(l^\prime\) vertices in \(A\) and the other \(l^\prime\) vertices having degree at most \(c\).

Since this copy of \(H^\prime\) is incident to at least \(l^\prime n-3l^2\) edges, any vertex in \(A\) has degree at least
\begin{eqnarray}\label{cprime}
&l^\prime n-3l^2-l^\prime c-\left(l^\prime-1\right)\left(n-1\right)\\
&\geq n-3l^2-l^\prime c\notag\\
&=n-c^\prime.\notag
\end{eqnarray}

There are at most \(\frac{4ln}{c}=\frac{n^\prime}{2}\) vertices of degree at least \(c\), and at most \(l^\prime c^\prime\) vertices not adjacent to all of \(A\). \[\frac{n-3l^2}{l_1+1}-\frac{n^\prime}{2}\geq\frac{n-3l^2}{2\left(l_1+1\right)}\geq 16l^4\binom{2l}{l}\geq l^\prime c^\prime,\] and so by the definition of \(B\), each vertex \(x\in B\) is adjacent to a vertex \(y\) which is adjacent to all of \(A\) and such that \(d_G(y)\leq c\). 

This condition on the vertices in \(B\) enables us to find, for each \(b\in B\), a copy of \(H^\prime\) from which half the vertices have small degree, and whose intersection with \(B\) contains \(b\). Further, we can find a set \(U\) of \(l^\prime-1\) vertices of degree at most \(c\) which are each adjacent to all of \(A\), so for any \(z\in A\), \(G\left[\left(U\cup\{x\}\cup\{y\}\cup\left(A\setminus\{z\}\right)\right)\right]\) is a graph on \(2l^\prime\) vertices which contains a copy of \(K_{l^\prime,l^\prime-1}\) with an extra vertex \(x\) adjacent to some vertex in the larger set. We can find a copy of \(H^\prime\) in this by letting a leaf of \(H^\prime\) correspond to \(x\), and so as in (\ref{cprime}), every vertex in \(B\) must have degree at least \(n-c^\prime\). If \(B\) contained at least \(l_1\) vertices, they would have common neighbourhood of size at least \(n-l_1c^\prime\geq l\), allowing us to find \(H_1\) in \(G[V(G)\setminus A]\), and again as the common neighbourhood of \(A\) is of size at least \(2l\), we can find a disjoint copy of \(H^\prime\), giving a copy of \(H\) in \(G\). Thus \(|B|\leq l_1-1\), and so \(|A\cup B|\leq l^\prime+l_1-1=l-1\).

We now define two more sets of vertices as follows:
\begin{align*}
D&=\left\{x\in G\setminus(A\cup B)\,|\,d_{A\cup B}(x)\geq l_1\right\},\\
E&=\left\{x\in G\setminus(A\cup B)\,|\,d_{A\cup B}(x)<l_1\right\}.
\end{align*}

We note that any vertex not in \(A\cup B\) which is adjacent to all of \(A\) is in \(D\), and thus \(\left|E\right|\leq l^\prime c^\prime\). There can be no \(H_1\) in \(E\), so the number of edges in \(E\) is at most \(\left(l_1-1\right)\left|E\right|\) by Erd\H{o}s-S\'os. No vertex \(v\in D\) can have a neighbour \(y\in D\cup E\), as we can find a set \(U\) of \(l_1-1\) vertices in \(A\cup B\) adjacent to \(v\), and \(W\subseteq (D\cup E)\setminus\{v,y\}\) consisting of \(l_1-1\) vertices adjacent to all of \(U\), and as before we can find a copy of \(H_1\) on \(U\cup W\cup\{v,y\}\) with only \(l_1-1\) vertices from \(A\cup B\); a contradiction. Thus all edges in \(G[D\cup E]\) are in \(E\).

Letting \(|A\cup B|=t\), we bound the number of edges in \(G\) by
\begin{align}
&\binom{t}{2}+t\left(n-t-\left|E\right|\right)+\left(l_1-1\right)\left|E\right|+\left(l_1-1\right)\left|E\right|\label{sec}\\
&=\binom{t}{2}+t\left(n-t\right)+\left(2l_1-2-t\right)\left|E\right|.\notag
\end{align}

If \(t<l-1\), then since \(|E|\leq l^\prime c^\prime\) the number of edges in \(G\) is at most \(\binom{t}{2}+t\left(n-t\right)+2l_1l^\prime c^\prime\leq\left(l-1\right)\left(n-l+1\right)\), for \(n\geq 2l^2c^\prime+l^2\).

The common neighbourhood of \(A\cup B\) has size at least \(n-(l-1)-(l-1)c^\prime\), as each vertex in \(A\cup B\) is adjacent to all but \(c^\prime\) vertices in \(G\). Thus we can find a copy of \(K_{l-1,n-(l-1)(c^\prime+1)}\subseteq G\), where the smaller class is \(A\cup B\). If \(H\) does not contain a perfect matching, then by Lemma \ref{noPerfLem} we can partition the vertices into unequal sets \(X\),\(Y\), the larger of which is empty, and the smaller of which contains one edge. This is clearly present in \(G\) if \(A\cup B\) contains an internal edge.

Counting all edges in \(G\), we see that by (\ref{sec}),
\[|E(G)|\leq(l-1)(n-l-1)-(l-2l_1+1)|E|+C_H,\]
where \(C_H=\binom{l-1}{2}\) if \(H\) admits a perfect matching, and \(C_H=0\) otherwise. As \(l_1\) is minimal, \((l-2l_1+1)>0\), and so the number of edges is maximized when \(|E|=0\).
\end{proof}

It is unlikely that the bound on \(n\) in Theorem \ref{treethm} is optimal. Determining the minimal value of \(n\) for which this construction is optimal remains an open question.

\section{Acknowledgements}
We would like to thank the anonymous referee for their suggestions on improvements to the style, structure, and content of this paper. We would also like to thank B\'ela Bollob\'as for pointing us towards this interesting problem.

\thebibliography{99}

\bibitem{dobson2}{S.~Balasubramanian, E.~Dobson, \emph{On the Erd\H{o}s-S\'os conjecture for graphs with no \(K_{2,s}\)}, J. Graph Theory \textbf{56} (2007), pp. 301-310.}
\bibitem{BGLS}{P.N.~Balister, E.~Gy\H{o}ri, J.~Lehel, R.H.~Schelp, \emph{Connected graphs without long paths}, Discrete Mathematics \textbf{308} (2008), pp. 4487--4494.}
\bibitem{mgt}{B.~Bollob\'as, \emph{Modern graph theory}, Springer-Verlag, New York (2002).}
\bibitem{dobson}{S.~Brandt, E.~Dobson, \emph{The Erd\H{o}s-S\'os conjecture for graphs of girth 5}, Discrete Mathematics \textbf{150} (1-3) (1996), pp. 411--414.}
\bibitem{EG}{P.~Erd\H{o}s, T.~Gallai, \emph{On maximal paths and circuits of graphs}, Acta Math. Acad. Sci. Hungar. \textbf{10} (1959), pp. 337--356.}
\bibitem{gorgol}{I.~Gorgol, \emph{Tur\'an Numbers for disjoint copies of graphs}, Graphs and Combinatorics (4 January 2011), pp. 1--7.}
\bibitem{Kopylov}{G.N. Kopylov, \emph{On maximal paths and cycles in a graph}, Soviet Math. Dokl. \textbf{18} (1977), pp. 593-596.}
\bibitem{mclennan}{A.~McLennan, \emph{The Erd\H{o}s-S\'os conjecture for trees of diameter four}, J. Graph Theory \textbf{49} (2005), pp. 291-301.}
\bibitem{mp}{W.~Moser, J.~Pach, \emph{Recent developments in combinatorial geometry}, in: New trends in discrete and computational geometry, Springer-Verlag, New York (1993).}
\bibitem{saclewozniak}{J.F.~Sacl\'e, M.~Wo\'zniak, \emph{The Erd\H{o}s-S\'os conjecture for graphs without \(C_4\)}, J. Combin. Theory Ser. B \textbf{70} (2) (1997), pp. 367--372.}
\bibitem{sidorenko}{A.~Sidorenko, \emph{Asymptotic solution for a new class of forbidden \(r\)-graphs}, Combinatorics \textbf{9} (2) (1989), pp. 207--215.}
\bibitem{simonovits}{M.~Simonovits, \emph{A method for solving extremal problems in extremal graph theory}, in: Theory of Graphs (P. Erd\H{o}s, G. Katona eds.), Academic Press, New York (1968), pp. 279--319.}
\bibitem{Turan1}{P.~Tur\'an, Egy gr\'afelm\'eleti sz\'els\H{o}\'ert\'ekfeladatr\'ol, Mat. es Fiz. Lapok. \textbf{48} (1941), pp. 436--452.}
\bibitem{Turan2}{P.~Tur\'an, On the theory of graphs, Colloquium Math. \textbf{3} (1954), pp. 19--30.}
\bibitem{wozniak}{M.~Wo\'zniak, \emph{On the Erd\H{o}s-S\'os conjecture}, J. Graph Theory \textbf{21} (2) (1996), pp. 229--234.}\end{document}